\newcommand{\Q}{\mathbb{Q}}
\newcommand{\F}{\mathbb{F}}
\newcommand{\R}{\mathbb{R}}
\newcommand{\C}{\mathbb{C}}
\renewcommand{\b}[1]{{\bf #1}}
\newcommand{\x}{\b{x}}
\newcommand{\Z}{\mathbb{Z}}
\newtheorem{theorem}{Theorem}
\newtheorem{corollary}{Corollary}
\newtheorem{lemma}{Lemma}
\newtheorem{question}{Question}
\newtheorem{result}{Result}
\author{Jahan Zahid\footnote{This research is supported by EPSRC: Grant wef 1 Oct 2006: EP/P502667/1}  \\ Mathematical Institute\\ University of Oxford}
\date{}
\title{Zeros of $p$-adic forms}
\begin{document}

\maketitle

\section{Introduction}
\subsection{Background and statement of main result}

We shall be interested in the existence of non-trivial $p$-adic zeros of $p$-adic forms. Recall that a form is a homogeneous polynomial in several variables. From now on we shall say a zero to mean a non-trivial zero unless we state otherwise. To set the scene, Artin originally thought about $C_{i}$ fields. A $C_{i}$ field is one in which every form of degree $d$ in $n$ variables is guaranteed to have a zero provided that $n > d^{i}$. For example $\C$ is a $C_{0}$ field, every finite field $\F_{q}$  is a $C_{1}$ field (this is the Chevalley-Warning theorem) and $\F_{q}((t))$ is a $C_{2}$ field (this was proved by Lang \cite{Lan52}). So in some sense one can view this $C_{i}$ property as a measure of how `algebraically closed' a field is. Artin then conjectured \cite{Art65} that $\Q_{p}$ is a $C_{2}$ field. We should note that one can easily construct forms with $n = d^{2}$ which only have the trivial $p$-adic zero so this is in fact the best one can hope for. It had already been well known that for the case of quadratic forms this conjecture holds. Demyanov \cite{Dem50} and Lewis \cite{Lew52} independently showed that the conjecture holds for cubic forms (Demyanov requiring that $p \ne 3$). Artin's conjecture was proved to be false by Terjanian \cite{Ter66} who exhibited a quartic form with 18 variables which only has the trivial zero in $\Q_{2}$. It is however still open whether the result holds for odd degree forms.

The first breakthrough in the general question came when Brauer \cite{Bra45} proved that for all $d$ there exists a least integer $v_{d}$ such that all forms of degree $d$ with $n > v_{d}$ have a $p$-adic zero for every prime $p$. Much work has been spent on trying to get the best possible bounds for $v_{d}$, the current record being due to Wooley \cite{Woo98} who has proved the neat result that 
\begin{equation} \label{woolbd}
 v_{d} \leq d^{2^{d}}. 
\end{equation}
Wooley's analysis does give us better bounds on $v_{d}$ than (\ref{woolbd}), which we shall describe in due course. We should establish the notation $v_{d}(p)$ at this point, to mean the least integer such that every form of degree $d$ with $n > v_{d}(p)$ has a non-trivial $p$-adic zero. 

In another direction Ax and Kochen \cite{AxKoc65} proved using methods from Mathematical Logic that for any $d$ there exists a least positive integer $p(d)$ such that provided $p \ge p(d)$ then every form with $n > d^{2}$ has a $p$-adic zero. So in some sense Artin's conjecture is `almost' true. Later work by Brown \cite{Bro78} provided a bound for $p(d)$. If we write $a \uparrow b$ for $a^{b}$ and $a \uparrow b \uparrow c$ for $a \uparrow (b \uparrow c)$  then Brown showed that

\[ p(d) \le 2 \uparrow 2 \uparrow 2 \uparrow 2 \uparrow 2 \uparrow d \uparrow 11 \uparrow (4d). \] 

As we have established a fair amount of notation at this point it is worth noting a few results that are currently known about the values of $v_{d}$ and $p(d)$. So for example Lewis' result tells us that $v_{3} = 9$ and hence $p(3) = 1$, Demyanov's counterexample tells us that $v_{4} \ge 18$ and $p(4) \ge 2$. Recently Heath-Brown \cite{Hea08} showed that $v_{4} \le 9144$, with $v_{4}(p) \le 312$ for $p \ne 2$. Leep and Yeomans \cite{LeeYeo96} showed that $p(5) \le 47$ and this was subsequently improved by Heath-Brown \cite{Hea08} to $p(5) \le 17$. Hence $v_{5}(p) = 25$ for $p \ge 17$.
\begin{question}
What is the best bound we can we get for $v_{5}$?
\end{question} 
Wooley's bound (\ref{woolbd}) tells us that
\begin{equation} \label{32}
v_{5} \leq 5^{32} \approx 2.3 \times 10^{22}.
\end{equation} 
But in fact we know already that $v_{5}(p) = 25$ for $p \ge 17$, so by considering the primes $2 \le p \le 13$ Wooley's analysis gives us more information than this viz.
\begin{equation} \label{wool5}
v_{5} \leq 6792217044067.
\end{equation} 
This gives a considerable improvement to (\ref{32}) and it is with this better estimate our new bounds shall be compared. We should now note the main results of this paper.

\begin{theorem} \label{main1}
We have 

\begin{enumerate} 

\item[(a)]
\begin{enumerate}
\item[(i)] $v_{5}(2) \le 300509,$
\item[(ii)] $v_{5}(3) \le 520329,$
\item[(iii)] $v_{5}(7) \le 180164,$
\item[(iv)] $v_{5}(13) \le 179592;$
\end{enumerate}

\item[(b)] $v_{5}(11) \le 348497.$

\end{enumerate}
\end{theorem}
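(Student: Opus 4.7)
The overall strategy will be a $p$-adic descent from quintic to lower-degree forms, using a Davenport-style $p$-normalization to guarantee that Hensel-lifting a zero modulo $p$ produces a non-trivial $p$-adic zero. Given a quintic $F \in \Z_p[x_1,\ldots,x_n]$ with $n$ exceeding the claimed bound, I would first choose the $\mathrm{GL}_n(\Z_p)$-representative of $F$ whose reduction $\bar F$ modulo $p$ is extremal in an appropriate sense (minimal codimension of the singular locus, or equivalently minimal $p$-multiplicative content). The problem then reduces to exhibiting a non-singular zero of $\bar F$ in $\F_p^n \setminus \{0\}$.

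To produce such a zero, I would bifurcate on the $h$-invariant (Birch rank) of $\bar F$. If $\bar F$ has small $h$-invariant, so that $\bar F = \sum_{i=1}^{h} A_i B_i$ with the $A_i, B_i$ of strictly lower degree, then $\bar F$ vanishes on the linear subspace cut out by $A_1 = \cdots = A_h = 0$; for $n$ large this subspace still carries many variables, and an appeal to Heath-Brown's quartic bounds ($v_{4}(p) \le 312$ for odd $p$, or $v_{4} \le 9144$ for $p = 2$) combined with the classical results of Demyanov and Lewis for degree $3$ and Chevalley--Warning for degree $2$ yields a non-trivial zero on that subspace. If, on the other hand, $\bar F$ has large $h$-invariant, then a Birch--Schmidt style density estimate, or a more refined count in the style of Leep--Yeomans, shows that the proportion of singular zeros of $\bar F$ is small, forcing the existence of a non-singular zero.

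The explicit numerical bounds stated in the theorem are obtained by balancing the two cases: the cutoff value of $h$ is chosen to minimise the worst-case number of variables, and the trade-off parameters depend sensitively on $p$ through the available quartic and cubic thresholds and through the Chevalley--Warning index. Iterating the descent through degrees $5 \to 4 \to 3 \to 2$ and composing the resulting inequalities produces numbers of the rough size $10^5$--$10^6$, matching the orders of magnitude in the statement.

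I expect the two main technical obstacles to be the prime $p = 2$, where the global quartic bound $v_{4} \le 9144$ is far weaker than the odd-prime bound and characteristic-$2$ pathologies in the lower-degree descents complicate both the Hensel step and the density count, and the prime $p = 11$, singled out as part (b); presumably the standard parameter choice of part (a) fails to close at $p = 11$ because the relevant Chevalley--Warning threshold for a quartic or cubic reduction falls on the wrong side of the trade-off, and one must substitute an auxiliary device---perhaps a different $h$-invariant cutoff, or a direct exploitation of a factorisation feature available only at that medium-sized prime---to recover a comparable numerical bound.
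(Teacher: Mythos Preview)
Your sketch is not the paper's argument, and as written it does not establish the stated numbers.

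The paper never reduces the quintic modulo $p$ and then bifurcates on a Schmidt/Birch $h$-invariant, and it does not use Heath-Brown's quartic thresholds $v_{4}(p)\le 312$ or $v_{4}\le 9144$ anywhere. Instead it builds an \emph{admissible set}: a set $\{\b{e}_{1},\ldots,\b{e}_{s}\}\subset\Q_{p}^{n}$ such that the restricted form $p^{-r}F(t_{1}\b{e}_{1}+\cdots+t_{s}\b{e}_{s})$ has only monomials of the shapes $t_{i}^{5}$, $t_{i}t_{j}^{4}$, and (depending on a parameter $t$) a few $t_{i}t_{j}t_{k}^{3}$, with all diagonal coefficients $p$-adic units. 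Constructing the $(m+1)$-st vector requires the simultaneous vanishing of a system of cubics, quadratics and linear forms in $\b{e}_{m+1}$; counting these gives the bound $n>V(r_{3},r_{2},r_{1};p)$ of Lemma~\ref{kad}. For $p\in\{2,3\}$ one takes $s=3$, $t=0$ and gets $v_{5}(p)\le V(15,20,25;p)$; for $p\in\{7,13\}$ one takes $s=3$, $t=1$ and gets $v_{5}(p)\le V(10,20,25;p)$; for $p=11$ one needs $s=4$ (hence $v_{5}(11)\le V(15,50,75;11)$). The reason $p=11$ is singled out has nothing to do with a Chevalley--Warning cutoff for quartics: it is that the \textsc{Magma} search over the restricted three-variable family fails to guarantee a non-singular $\F_{11}$-zero (consistently with $\phi_{5}(11)=15$ being large), so a four-variable admissible set is required. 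The final arithmetic uses the explicit recursion~(\ref{impcubicbd}) (or Theorem~\ref{newresult} for $p=7,13$) to pass from $V(a,b,c;p)$ to $V(\alpha,0;p)+\beta$, and then Lemmas~\ref{sysquad}--\ref{L5} on systems of quadratics; this is what produces the exact integers $300509$, $520329$, $180164$, $179592$, $348497$.

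Your outline has two concrete gaps. First, in the small-$h$ branch you propose setting $A_{1}=\cdots=A_{h}=0$ and then invoking quartic bounds; but the $A_{i}$ in a decomposition $\bar F=\sum A_{i}B_{i}$ of a quintic are not linear in general, so the zero locus is not a linear subspace on which you can ``still have many variables'' and apply $v_{4}$; and even when it is, you must produce a \emph{non-singular} zero of $\bar F$, not merely a zero, and this is not automatic on a subspace where $\bar F$ vanishes identically. Second, the large-$h$ branch is left as ``a Birch--Schmidt style density estimate''; over $\F_{p}$ with $p\le 13$ such estimates are not available in a form that yields the specific six-digit thresholds claimed, and nothing in your sketch indicates how the balancing of the two branches would output, say, $180164$ rather than some other number. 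The paper's mechanism---restricting to a sparse normal form and then exhaustively checking over $\F_{p}$ (or $\Z/25\Z$) by computer---is what makes the bounds both finite and explicit.
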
 

\begin{theorem} \label{main2}
We have $v_{5}(5) \le 4562911.$
\end{theorem}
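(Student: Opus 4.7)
The strategy will mirror the Wooley-type iterative analysis used to establish Theorem~\ref{main1}, but with additional care because of the coincidence $d=p=5$. When $d\ne p$ one can typically deduce a zero of a form $F$ from a non-singular zero of its reduction $\bar F$ modulo $p$ via Hensel's lemma, but every partial derivative of a quintic monomial vanishes identically modulo $5$, so this direct lifting is unavailable and a deeper descent on the $5$-adic filtration is required.

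The first step is a $p$-normalisation of the given quintic $F\in\Z_5[x_1,\ldots,x_n]$ with $n>4562911$. After rescaling we may assume $F$ is primitive, and after an integral change of variables we may suppose its reduction modulo $5$ has maximal possible rank in the sense of Davenport-Lewis. Iterating this procedure on the $5$-divisible remainder exposes successive layers of quintic forms $F_0,F_1,\ldots$ supported on disjoint blocks of variables, with $F\equiv\sum_k 5^k F_k$ modulo a large power of $5$, and with each $F_k$ primitive.

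The heart of the argument is the descent step. A non-singular zero of any $\bar F_k$ modulo $5$ would lift to a $5$-adic zero of $F$; failing this, the identity $d=p=5$ forces each $\bar F_k$ to coincide modulo $5$ with a linear combination of pure fifth powers of linear forms. Substituting into the locus where this fifth-power expression vanishes produces, on a smaller subset of variables, an auxiliary problem involving a form of strictly lower degree. Iterating this reduction connects the quintic problem to problems of degrees $4,3,2,1$, which are controlled by the known inputs $v_2(5)=4$, $v_3(5)\le 9$ from the Demyanov-Lewis theorem, the Heath-Brown estimate for $v_4(5)$, and Wooley's bounds for the intermediate objects appearing between the layers.

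The main obstacle is this descent: the failure of Hensel's lemma at $d=p$ obliges one additional normalisation-and-restriction cycle in the $5$-adic filtration before any lifting can be invoked, and this extra cycle effectively multiplies the variable count required at every stage. Carrying out the bookkeeping for how many variables are consumed at each layer, and optimising the split between the layers, will yield the bound $v_5(5)\le 4562911$. The extra descent is precisely what inflates this bound relative to those of Theorem~\ref{main1}, where the other primes $p\ne 5$ enjoy direct Hensel lifting from a non-singular zero modulo $p$.
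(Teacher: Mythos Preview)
Your outline does not match the paper's argument, and as written it does not produce a proof.

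First, two of your preliminary claims are false or unjustified. It is not true that ``every partial derivative of a quintic monomial vanishes identically modulo $5$'': only the pure powers $x_i^5$ have this property, while e.g.\ $\partial(x_1^4x_2)/\partial x_2=x_1^4\not\equiv 0$. So Hensel lifting from a non-singular zero modulo $5$ is not blocked for a general quintic; the actual obstruction in the paper's setup is that, after restriction to the admissible subspace, the dominant diagonal part $\sum a_i t_i^5$ has identically vanishing gradient mod $5$, which is a much more specific statement. Second, your assertion that failure of a non-singular zero forces $\bar F_k$ to be a linear combination of fifth powers of linear forms is not proved and is not used anywhere in the paper; you would need to justify it, and I do not see how.

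More importantly, the number $4562911$ is not produced by any Davenport--Lewis layer decomposition or by quoting $v_3(5)$, $v_4(5)$. In the paper it arises from three concrete ingredients you have omitted entirely:
\begin{enumerate}
\item An admissible-set construction (Lemma~\ref{kad}) yielding six vectors $\mathbf e_1,\ldots,\mathbf e_6$ so that the restricted form $p^{-r}F(t_1\mathbf e_1+\cdots+t_6\mathbf e_6)$ has the very special shape of Result~\ref{magma3}. Achieving this shape forces $n>V(30,100,175;5)$.
\item A finite computer verification (Result~\ref{magma3}, via MAGMA) that any form of that shape with $5\nmid a_1\cdots a_6$ admits $\mathbf t$ with $f(\mathbf t)\equiv 0\pmod{25}$, $5\parallel \partial f/\partial t_i(\mathbf t)$ and $5\mid \partial^2 f/\partial t_i^2(\mathbf t)$ for some $i$.
\item A non-standard Hensel lemma (Lemma~\ref{A3}) that lifts exactly this mod-$25$ data to a $5$-adic zero.
\end{enumerate}
The numerical endgame is then the reduction $V(30,100,175;5)\le V(1495,0;5)+92875$ via (\ref{impcubicbd}) with $\psi=\phi_3(5)=3$, followed by Lemma~\ref{sysquad} on $V(1495,0;5)$, which sums to $4562911$. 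None of your proposed steps --- the $5$-adic layer decomposition, the fifth-power rigidity claim, or the descent to degrees $4,3,2$ --- appears in this chain, and your sketch gives no mechanism by which exactly this number would emerge. To repair the argument you need the admissible-set machinery, the mod-$25$ computational input, and Lemma~\ref{A3}.
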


The result for $p=5$ has been stated separately since a non-standard version of Hensel's lemma is required to lift a zero modulo $25$ to a zero in $\Q_{5}$. There are also some differences for the case $p=11$. However the same version of Hensel's lemma is used throughout in proving Theorem \ref{main1}. 
The difference in approach between our argument and Wooley's, is in the former requiring the vanishing of a `smaller' system of auxiliary forms than the latter, resulting in less stringent conditions on the number of variables to achieve this. 

We can also prove some results concerning systems of cubic and quadratic forms. However to be able to describe them effectively we need to introduce some notation first. We assume we are working over the field $\Q_{p}$. Given any system of $r_{1},r_{2},\ldots,r_{d}$ forms of degrees $1,2,\ldots,d$ respectively in $n$ variables, we define $V(r_{d},\ldots,r_{1})$ to be the least integer $V$ such that the system above has a zero, provided that $n>V$. More generally we shall write,  $V^{(m)}(r_{d},\ldots,r_{1})$ to mean the least integer $V$ such that any system as above vanishes on some subspace (over $\Q_{p}$) of dimension strictly greater than $m$, provided that $n>V$. For convenience we set $V^{(m)}(0,0,\ldots,0)=m$. Also we may sometimes write $V(r_{d},\ldots,r_{1};p)$ to indicate that we are restricting to the field $\Q_{p}$ for some fixed prime $p$. We should note that the following relation is fairly obvious.
\begin{eqnarray} 
V^{(m)}(r_{d},\ldots,r_{1}) & = & V^{(m)}(r_{d},\ldots,r_{2},0) + r_{1}. \label{lineq}
\end{eqnarray}

Although Artin's conjecture was originally stated for single forms of a certain degree we could have also stated it in the following way.

\begin{equation} \label{genconj}
V(r_{d}, \ldots, r_{2}, r_{1}) = r_{1} + 4r_{2} + \ldots + d^{2}r_{d}.
\end{equation}

So we can ask questions concerning systems of forms rather than a single form, and as it will turn out our analysis will require us to think more generally about systems rather than single forms. In fact it was Lang \cite{Lan52} (at the time a student of Artin) who proved that if $\Q_{p}$ is $C_{2}$ then this implies (\ref{genconj}). But as Terjanian showed that $\Q_{p}$ is not $C_{2}$, it is still worth studying systems for their own sake. The first result concerning systems of forms came from Birch, Lewis and Murphy \cite{BirLewMur62} who proved that

\[ V(2,0) = 8. \]
So Artin's conjecture holds for a system of two quadratic forms. The next two simplest cases after this concerns a system of a cubic and a quadratic form and a pair of cubic forms. However it is still unknown whether Artin's conjecture holds in these cases viz. do we have

\[ V(1,1,0) = 13 \qquad \mbox{and} \qquad V(2,0,0) =18? \]
We shall give bounds for these quantities by using the following theorem.

\begin{theorem} \label{newresult}
Let $r_{1},r_{2},r_{3}$ be non-negative integers, with $r_{3}>0$. For $p \ne 3$ we have 
\[ V(r_{3},r_{2},r_{1};p) \le V(r_{3}-1,r_{2}+6(r_{3}-1),r_{1}+6r_{2}+9r_{3};p). \]
\end{theorem}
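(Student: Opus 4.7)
My plan is to eliminate the cubic $C = C_{r_3}$ by constructing an auxiliary system of $r_3-1$ cubics, $r_2 + 6(r_3-1)$ quadratics, and $r_1 + 6r_2 + 9r_3$ linears whose common zero can be lifted to a zero of the original system via a Hensel-type perturbation. This is in the spirit of the Davenport--Birch--Wooley reduction method for cubic forms.

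To set things up I would introduce polarisations: let $B_i$ be the symmetric trilinear polar of $C_i$ so that $C_i(x) = B_i(x,x,x)$, and $\widetilde{Q}_j$ the symmetric bilinear polar of $Q_j$ so that $Q_j(x) = \widetilde{Q}_j(x,x)$. The key Taylor identity is
\[ C_i(v+tw) = C_i(v) + 3t\,B_i(v,v,w) + 3t^2 B_i(v,w,w) + t^3 C_i(w), \]
with an analogous expansion for the $Q_j$. If $v$ is already a zero of the lower-degree forms and one perturbs $v \leadsto v + tw$ in order to kill $C_{r_3}$ without disturbing the other forms, the vanishing requirements on $w$ are precisely that $L_k(w)=0$, $\widetilde{Q}_j(v,w)=0$, $Q_j(w)=0$, and $B_i(v,v,w) = B_i(v,w,w) = C_i(w) = 0$ for $i<r_3$.

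The main construction is the auxiliary system. I would fix a flag of three directions $u_1,u_2,u_3\in\Q_p^n$ (to be chosen below) and form the polar slices of all the original forms along this flag, producing: $6(r_3-1)$ auxiliary quadratic forms of the shapes $B_i(u_a,x,x)$ and $B_i(u_a,u_b,x)\cdot(\cdot)$ for $i<r_3$ and $1\le a\le b\le 3$ (the count $6=\binom{4}{2}$ being the number of symmetric pairs from the triple), together with $6r_2 + 9r_3$ auxiliary linear forms of the shapes $\widetilde{Q}_j(u_a,x)$ (and analogous combinations) for the quadratics, and $B_i(u_a,u_b,x)$ for all $r_3$ cubics across the triple. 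Appealing to the hypothesis $n > V(r_3-1, r_2+6(r_3-1), r_1+6r_2+9r_3; p)$ and the definition of $V$, the augmented system has a non-trivial $\Q_p$-zero $v$ at which the original forms $C_1,\ldots,C_{r_3-1}$, $Q_1,\ldots,Q_{r_2}$, $L_1,\ldots,L_{r_1}$ all vanish along with the auxiliaries.

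To finish, the auxiliaries are designed so that in a direction $w$ built from the $u_a$'s one has $L_k(w) = \widetilde{Q}_j(v,w) = Q_j(w) = B_i(v,v,w) = B_i(v,w,w) = 0$ for all the relevant indices, while the linear Taylor coefficient $3B_{r_3}(v,v,w)$ of $C_{r_3}$ at $v$ is a $p$-adic unit. Hensel's lemma then provides a simple $p$-adic root $t_0$ of $t \mapsto C_{r_3}(v+tw)$, and the vanishing conditions on $w$ ensure that $v+t_0w$ remains a zero of all the other original forms. The main obstacle is the construction step, where one must verify that the specific counts $6(r_3-1)$ and $6r_2+9r_3$ really do suffice to enforce every condition needed for the lift; this is also where the hypothesis $p\ne 3$ is essential, since the non-degenerate Hensel lift on $C_{r_3}$ requires inverting the factor of $3$ appearing in its linear Taylor coefficient.
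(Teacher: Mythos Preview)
Your approach has a genuine gap at the Hensel step. You assert that ``the linear Taylor coefficient $3B_{r_3}(v,v,w)$ of $C_{r_3}$ at $v$ is a $p$-adic unit'', but nothing in your construction forces this: you have imposed conditions on $v$ relative to fixed directions $u_1,u_2,u_3$, yet none of those conditions controls the valuation of $B_{r_3}(v,v,w)$, and $v$ could well be a point where $C_{r_3}$ has high valuation or is singular in every direction of $\mathrm{span}(u_1,u_2,u_3)$. There is also a circularity in the order of construction: several of your vanishing requirements, for instance $C_i(w)=0$ and $Q_j(w)=0$, depend only on $w$ (hence on the $u_a$) and not on $v$, so they cannot be encoded as auxiliary forms in the unknown $v$ once the $u_a$ are fixed in advance. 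Finally, the form counts you describe do not match the targets: three directions give $3(r_3-1)$ quadratics $B_i(u_a,x,x)$, $3r_2$ linears $\widetilde Q_j(u_a,x)$, and $6r_3$ linears $B_i(u_a,u_b,x)$, not $6(r_3-1)$, $6r_2$, and $9r_3$. The numbers appear to have been reverse-engineered from the statement rather than derived.

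The paper's argument is structurally different and explains where those counts, and the hypothesis $p\neq 3$, actually come from. One assumes for contradiction that the system has no zero, singles out one cubic $F$, and for each nonzero $\mathbf{x}$ annihilating the remaining forms $\mathbf{G}$ defines a ``level'' $v(F(\mathbf{x}))\bmod 3\in\{0,1,2\}$. One then \emph{iteratively} builds vectors $\mathbf{e}_1,\mathbf{e}_2,\ldots$ so that within each level class $\mathbf{G}$ vanishes on the span and $F$ restricts to the special shape $\sum a_it_i^3+\sum_{i<j}b_{ij}t_it_j^2$; adding one more vector costs at most $r_3-1$ cubic, $r_2+6(r_3-1)$ quadratic, and $r_1+6r_2+9r_3$ linear conditions, the worst case being when each of the three level classes already holds two vectors. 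By pigeonhole one level class reaches size three, and the normalisation by $p^{-r}$ makes the diagonal coefficients $a_i$ into $p$-adic units automatically. The condition $p\neq 3$ enters not through a factor of $3$ in a Taylor coefficient but through Lemma~\ref{cubic-ad}: a cubic $ax^3+bxy^2+cy^3+(dx+ey)z^2+fz^3$ over $\F_p$ with $acf\neq 0$ has a non-singular $\F_p$-zero when $p\neq 3$, which Hensel then lifts. It is precisely this level-and-pigeonhole mechanism that manufactures the unit diagonal needed for a non-degenerate lift, and that mechanism is absent from your sketch.
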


We shall give a proof of this in due course. However we should note the immediate corollary

\begin{corollary} 
For $p \ne 3$ we have
\begin{enumerate}
\item[(i)] $V(1,1,0;p) \le 19$, 
\item[(ii)] $V(2,0,0;p) \le 119$.
\end{enumerate}
\end{corollary}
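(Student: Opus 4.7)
The plan is straightforward: apply Theorem~\ref{newresult} once for part~(i) and twice for part~(ii), clear away any linear forms left behind using (\ref{lineq}), and then invoke a known estimate for (systems of) $p$-adic quadratic forms as the base case.

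For part~(i), set $(r_3,r_2,r_1)=(1,1,0)$ in Theorem~\ref{newresult}. The recursion returns
$$V(1,1,0;p)\;\le\;V(0,\,1,\,0+6\cdot 1+9\cdot 1;p)\;=\;V(0,1,15;p).$$
By (\ref{lineq}) the right-hand side equals $V(0,1,0;p)+15$. Now $V(0,1,0;p)$ is the least integer such that every $p$-adic quadratic form in strictly more variables has a non-trivial zero, and by the classical Hasse--Minkowski / $C_2$ property for $p$-adic quadratic forms this is $4$. Hence $V(1,1,0;p)\le 4+15=19$.

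For part~(ii), iterate Theorem~\ref{newresult} twice. First, with $(r_3,r_2,r_1)=(2,0,0)$,
$$V(2,0,0;p)\;\le\;V(1,\,6,\,18;p),$$
since $r_2+6(r_3-1)=6$ and $r_1+6r_2+9r_3=18$. Applying the theorem again, now with $(r_3,r_2,r_1)=(1,6,18)$, gives
$$V(1,6,18;p)\;\le\;V(0,\,6,\,18+36+9;p)\;=\;V(0,6,63;p),$$
which by (\ref{lineq}) equals $V(0,6,0;p)+63$. So the whole corollary reduces to showing $V(0,6,0;p)\le 56$, i.e.\ that every system of six $p$-adic quadratic forms in more than $56$ variables has a non-trivial common zero. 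This is the one genuinely external ingredient; it is supplied by known bounds on simultaneous $p$-adic quadratic forms (the kind of estimate that already features in the bounds for $v_4$ cited in the introduction).

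The only non-routine step, therefore, is importing the bound $V(0,6,0;p)\le 56$ from the literature on systems of quadratic forms over $\Q_p$; everything else is pure bookkeeping with the recursion in Theorem~\ref{newresult} and the trivial identity (\ref{lineq}). Thus the expected main obstacle is not the iteration, which is mechanical, but verifying that the chosen quadratic-system bound is sharp enough to give $56$ (rather than something larger which would inflate the $119$).
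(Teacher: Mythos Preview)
Your argument is correct and matches the paper's proof essentially line for line: apply Theorem~\ref{newresult} once for (i) and twice for (ii), then strip off the linear forms via (\ref{lineq}) and finish with the quadratic bound. The bound $V(0,6,0;p)\le 56$ that you flag as external is precisely Lemma~\ref{sysquad}(vi) in the paper (quoted from Heath-Brown), so no additional literature search is needed.
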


\begin{proof}
By Theorem \ref{newresult} and (\ref{lineq}) we have
\[ V(1,1,0;p) \le V(1,15;p) = 4 + 15 = 19, \]
and
\[ V(2,0,0;p) \le V(1,6,18;p) \le V(6,0;p) + 63. \] 
We shall see later (Lemma \ref{sysquad}) that, $V(6,0;p) \le 56$. Hence we obtain
\[ V(2,0,0;p) \le 119. \]
\end{proof}

To put these results into context we should note that the best bound one can obtain for $V(1,1,0)$ is by using the techniques of Leep and Schmidt \cite{LeeSch83}, where one can show that 

\[ V(1,1,0) \le 22. \]
Also Dietmann and Wooley \cite{DieWoo03} showed 

\[ V(2,0,0) \le 298. \]
They also show that 

\[ V(2,0,0;3) \le 233. \]
It might be possible to extend Theorem \ref{newresult} to the case $p=3$. This would require us to show  form has a zero modulo $9$ which lifts to a zero in $\Q_{3}$.

We shall now briefly focus our attention to forms over $\Q$. We can ask the same questions that we asked about forms over the local field $\Q_{p}$: is there an equivalent to Brauer's theorem \cite{Bra45} for $\Q$? The answer is no: there always exist positive definite forms which don't even have a zero in $\R$. For example 
\[ F(x_{1},\ldots,x_{n}) = x_{1}^{2k}+\ldots+x_{n}^{2k} \]
has only the trivial zero for all $k \ge 1$. However it is a remarkable theorem of Birch \cite{Bir57} that for all odd $d$ there exists a least integer $w_{d}$ such that all forms of degree $d$ over $\Q$ with $n > w_{d}$ have a zero. Birch famously said that his bounds for $w_{d}$ are ``not even astronomical''. Heath-Brown \cite{Hea07} has shown that $w_{3} \le 14$, by using the circle method approach. More recently Wooley \cite{Woo07} has shown that $w_{5} \le 1.38 \times 10^{14}$, by employing an elegant idea which involves finding zeros of the form in an algebraic extension of $\Q$.

If we restrict ourselves to non-singular forms, much better bounds can be obtained for the required number of variables to guarantee solubility. If we denote $w_{d}^{*}$ to mean $w_{d}$, where we have restricted to non-singular forms, then Birch proved \cite{Bir61} that $w_{d}^{*} \le (d-1)2^{d}$ provided that we have $p$-adic solubility for all $p$. Heath-Brown \cite{Hea84} has shown that $w_{3}^{*} \le 9$. Moreover Theorem's \ref{main1} and \ref{main2} imply that $w_{5}^{*} \le 4562911$.

\section{Quintic forms}

In this section we shall show how one obtains Wooley's bound (\ref{wool5}) and go on to prove Theorems \ref{main1} and \ref{main2}. 

\subsection{Some Preliminaries}

We should at this point establish the notation $\phi_{d}(p)$ to mean the least integer such that every diagonal form of degree $d$ with $n>\phi_{d}(p)$ variables has a non-trivial $p$-adic zero, and similarly $\phi_{d} =$ sup$_{p}\{\phi_{d}(p)\}$. Clearly $\phi_{d} \le v_{d}$. Moreover it was shown by Davenport and Lewis \cite{DavLew63} that for every $d$ we have $\phi_{d} \le d^{2}$, thus establishing Artin's conjecture for the class of diagonal forms. The key result in Wooley's argument as found in \cite[Lemma 2.1]{Woo98} is as follows.

\begin{lemma}
Let $d$ and $r_{i}$ $(1 \le i \le d)$ be non-negative integers with $d \ge 2$ and $r_{d} > 0$. Then provided $\phi_{d}< \infty$ one has

\begin{equation}  \label{2.1}
V(r_{d},r_{d-1},\ldots,r_{1}) \le \phi_{d} + V(r'_{d},r'_{d-1},\ldots,r'_{1})
\end{equation} 
where $r'_{d} = r_{d}-1$ and

\[ r'_{j} = \sum_{i=j}^{d}r_{i}\binom{\phi_{d}+i-j-1}{i-j} \qquad (1 \le j < d). \]

\end{lemma}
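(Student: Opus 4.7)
The proof is a \emph{reduce-by-one} argument: we absorb one of the $r_d$ degree-$d$ forms at the cost of $\phi_d$ extra variables, thereby reducing to a system governed by $V(r'_d,\ldots,r'_1)$. Distinguish one of the degree-$d$ forms and call it $F$. The plan is to locate a linear subspace $\Pi \subseteq \Q_p^n$ of dimension $\phi_d + 1$ on which every form of the original system other than $F$ vanishes identically, and on which $F$ restricts, in a suitable basis, to a diagonal form $a_0 u_0^d + a_1 u_1^d + \cdots + a_{\phi_d} u_{\phi_d}^d$. Once such $\Pi$ is in hand, the definition of $\phi_d$ (applied to a diagonal form of degree $d$ in $\phi_d + 1 > \phi_d$ variables) delivers a nontrivial $p$-adic zero of $F|_\Pi$, which is the required common zero of the whole system.

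To build $\Pi$, I would fix linearly independent vectors $\mathbf{y}_1,\ldots,\mathbf{y}_{\phi_d}\in\Q_p^n$ and let $\mathbf{z}$ range over a complementary subspace $W$ of dimension $n - \phi_d > V(r'_d,\ldots,r'_1)$. Setting $\mathbf{x} = \mathbf{z} + t_1\mathbf{y}_1 + \cdots + t_{\phi_d}\mathbf{y}_{\phi_d}$ and expanding each form $G$ of degree $i$ in the system,
\[
G\bigl(\mathbf{z} + t_1\mathbf{y}_1 + \cdots + t_{\phi_d}\mathbf{y}_{\phi_d}\bigr) \;=\; \sum_{\vec{e}} \vec{t}^{\,\vec{e}}\,G_{\vec{e}}(\mathbf{z}),
\]
the coefficient $G_{\vec{e}}(\mathbf{z})$ is a form of degree $i - |\vec{e}|$ in $\mathbf{z}$, and the number of monomials $\vec{t}^{\,\vec{e}}$ of total $t$-degree $i - j$ is exactly $\binom{\phi_d + i - j - 1}{i - j}$. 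Thus each original form of degree $i$ produces $\binom{\phi_d + i - j - 1}{i - j}$ auxiliary forms of degree $j$ in $\mathbf{z}$ for each $0 \le j \le i$.

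Collecting across the system, the $r_d - 1$ un-singled-out degree-$d$ forms contribute their $\vec{e} = \vec{0}$ parts to give $r'_d = r_d - 1$ degree-$d$ forms in $\mathbf{z}$; and for $1 \le j < d$, pooling across all $i \ge j$ (and including the non-leading-in-$\mathbf{z}$ pieces of $F$, which are exactly those responsible for forcing $F|_\Pi$ to be diagonal) yields precisely $\sum_{i \ge j} r_i\binom{\phi_d + i - j - 1}{i - j} = r'_j$ forms of degree $j$. Since $\dim W > V(r'_d,\ldots,r'_1)$, the definition of $V(r'_d,\ldots,r'_1)$ produces a nonzero $\mathbf{z}^* \in W$ simultaneously annihilating this whole auxiliary system. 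With $\Pi := \langle \mathbf{z}^*,\mathbf{y}_1,\ldots,\mathbf{y}_{\phi_d}\rangle$, every non-$F$ form then vanishes on $\Pi$ and every mixed $t$-coefficient of $F$ vanishes at $\mathbf{z}^*$, leaving $F|_\Pi$ diagonal in the basis $(\mathbf{z}^*,\mathbf{y}_1,\ldots,\mathbf{y}_{\phi_d})$; one final application of $\phi_d$ finishes the argument.

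The main obstacle is the bookkeeping for the \emph{pure-$\mathbf{y}$} terms, namely the coefficients $G_{\vec{e}}$ with $|\vec{e}| = i$, which are independent of $\mathbf{z}$ and therefore do not appear in the auxiliary system solved in $W$. These must be arranged to vanish by a preliminary choice of $\mathbf{y}_1,\ldots,\mathbf{y}_{\phi_d}$---a sub-problem of the same flavour, but living inside the $\phi_d$-dimensional span of the $\mathbf{y}_k$ and so consuming no part of the $V(r'_d,\ldots,r'_1)$ variable budget reserved for $\mathbf{z}$. Once these pure-$\mathbf{y}$ conditions are resolved, the identity $r'_j = \sum_{i \ge j} r_i\binom{\phi_d + i - j - 1}{i - j}$ does all the counting work, and the lemma follows.
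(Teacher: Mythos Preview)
The paper does not give its own proof of this lemma---it is quoted directly from Wooley \cite[Lemma 2.1]{Woo98}. However, the paper carries out the same construction in the proofs of Lemma~\ref{kad} and Lemma~\ref{F-ad}, and comparing against those exposes where your argument breaks.

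The gap is exactly at the point you yourself flag as ``the main obstacle''. You fix $\mathbf{y}_1,\ldots,\mathbf{y}_{\phi_d}$ and then solve an auxiliary system for $\mathbf{z}$ in a complement $W$. The pure-$\mathbf{y}$ coefficients $G_{\vec e}$ with $|\vec e|=i$ are constants in $\mathbf{z}$, so no choice of $\mathbf{z}$ can kill them; you therefore need $\langle\mathbf{y}_1,\ldots,\mathbf{y}_{\phi_d}\rangle$ already to be a $\phi_d$-dimensional subspace on which every non-$F$ form vanishes and on which $F$ is diagonal. Your claim that arranging this ``lives inside the $\phi_d$-dimensional span'' and ``consumes no part of the variable budget'' is not right: the $\mathbf{y}_k$ must be chosen from $\Q_p^n$, not from some pre-existing $\phi_d$-dimensional space (the span is the \emph{result} of the choice, not the domain in which it is made). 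Finding such a subspace is a problem of the shape $V^{(\phi_d-1)}(r_d-1,r_{d-1},\ldots,r_1)$ together with diagonalisation conditions on $F$---at least as hard as the lemma you are trying to prove. So as written the argument is circular.

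The standard repair, which is what Wooley does and what the paper imitates in the proofs of Lemmas~\ref{kad} and~\ref{F-ad}, is to build the vectors $\mathbf{e}_1,\ldots,\mathbf{e}_{\phi_d+1}$ \emph{one at a time}. At stage $k$ the inductive hypothesis guarantees that all the required identities already hold on $\langle\mathbf{e}_1,\ldots,\mathbf{e}_{k-1}\rangle$, so the pure-$(\mathbf{e}_1,\ldots,\mathbf{e}_{k-1})$ coefficients are automatically zero and one only has to impose the conditions of positive degree in $\mathbf{e}_k$, which form a genuine system of forms in the unknown $\mathbf{e}_k$. Your monomial count is then exactly right for the \emph{final} step $k=\phi_d+1$, where it gives precisely $(r'_d,\ldots,r'_1)$; earlier steps impose strictly fewer conditions. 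Requiring $\mathbf{e}_{\phi_d+1}$ to lie in a complement of the first $\phi_d$ vectors produces the extra $\phi_d$ in Wooley's bound, while Heath-Brown's refinement~\eqref{hea1} drops that term by not passing to a complement and using Lemma~\ref{linind} to recover linear independence for free.
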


Heath-Brown \cite{Hea08} has made a small improvement to this replacing (\ref{2.1}) by 

\begin{equation} \label{hea1}
V(r_{d},r_{d-1},\ldots,r_{1}) \le V(r'_{d},r'_{d-1},\ldots,r'_{1}).
\end{equation}

We begin by using (\ref{2.1}) to prove (\ref{wool5}). From (\ref{2.1}) we have

\begin{equation} \label{v5}
v_{5}(p) = V(1,0,0,0,0;p) \le
 \phi+ V\Big(\phi,\binom{\phi+1}{2},\binom{\phi+2}{3},\binom{\phi+3}{4};p \Big),
 \end{equation}
where we have set $\phi = \phi_{5}(p)$ for brevity. Furthermore in writing $\varphi = \phi_{4}(p)$ we have

\[ V(a,b,c,d;p) \le \varphi + V(a',b',c',d';p) \]
where 

\[ a' = a-1, \qquad b'= b + a\varphi, \qquad c' = c + b\varphi +a\frac{\varphi(\varphi+1)}{2}, \]
\[ d' = d + c\varphi +b\frac{\varphi(\varphi+1)}{2} + a\frac{\varphi(\varphi+1)(\varphi+2)}{6}.  \]
It follows from an easy induction argument that by applying (\ref{2.1}) $a$ times we have

\begin{equation} \label{quarticbd}
V(a,b,c,d;p) \le a\varphi + V(\alpha,\beta,\gamma;p)
\end{equation}
where 

\[ \alpha = b + \frac{\varphi}{2}a(a+1),  \]
\[ \beta = c+ \varphi ab + \frac{\varphi(\varphi+1)}{2}a^{2} - \frac{\varphi}{4}a(a-1) + \frac{\varphi^{2}}{12}a(a-1)(4a+1),   \] 
\[ \gamma = d + \frac{\varphi}{6}a(2a+3b+6c +3 \varphi (a+b) + \varphi^{2}a) + \frac{\varphi}{12}a(a-1)(-2+6 \varphi (a+b) + 3 \varphi^{2}a) \]
\[ + \frac{\varphi^{2}}{12}a(a-1)(2a-1)(\varphi a -1) - \frac{\varphi^{3}}{24}a^{2}(a-1)^{2}. \]
\\
Moreover by setting $\psi = \phi_{3}(p)$, we can show similarly by induction that by applying (\ref{2.1}) $a$ times, we have

\begin{equation} \label{cubicbd}
V(a,b,c;p) \le a \psi + V(\alpha,0;p) + \beta
\end{equation}
where

\[ \alpha = b + \frac{\psi}{2}a(a+1), \]
\[ \beta = c + \psi ab + \frac{\psi^{2}}{3} a(a^{2}-1) + \frac{\psi (\psi+1)}{2}\frac{a(a+1)}{2}. \]
\\
At this point we should say something about $V(r,0)$. We could iteratively apply (\ref{2.1}) again, however one can do slightly better than this. For ease of notation, let $u(r,m;p) = V^{(m)}(r,0;p)$ and we shall denote $u(r;p)=u(r,0;p)$. In proceeding we shall note two results from Heath-Brown's paper \cite[Lemma 1 and 2]{Hea08}.

\begin{lemma} \label{sysquad}
For every prime $p$ we have
\begin{enumerate}
\item[(i)]
$u(1;p)=4$,
\item[(ii)]
$u(2;p)=8$,
\item[(iii)]
$u(3;p) \le 16$,
\item[(iv)]
$u(4;p) \le 24$,
\item[(v)]
$u(5;p) \le 40$,
\item[(vi)]
$u(6;p) \le 56$,
\item[(vii)]
$u(r;p) \le 2r^{2}-16$ for even $r \ge 8$,
\item[(viii)]
$u(r;p) \le 2r^{2}-14$ for odd $r \ge 7$.
\end{enumerate}
\end{lemma}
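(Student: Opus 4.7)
Parts (i) and (ii) are base cases requiring no induction. Part (i), $u(1;p)=4$, is the classical fact that $\Q_p$ has $u$-invariant $4$: by the structure theory of quadratic forms over local fields, every non-degenerate quadratic form splits as hyperbolic planes plus an anisotropic kernel of dimension at most four, so any form in $\ge 5$ variables is isotropic, while quaternionic norm forms furnish the matching lower bound. Part (ii), $u(2;p)=8$, is the theorem of Birch, Lewis and Murphy already cited in the introduction.

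For parts (iii)--(vi), I would proceed by induction on $r$, based on a strengthening of (ii). The key intermediate claim is the pair estimate $V^{(m)}(2,0;p)\le 2m+8$: a pair of quadratic forms over $\Q_p$ in more than $2m+8$ variables admits a common totally isotropic subspace of dimension at least $m+1$. Granted this, splitting off a pair $Q_1,Q_2$ from a system of $r$ forms and using the pair estimate with $m=u(r-2;p)$ produces a common totally isotropic subspace $L$ of dimension strictly greater than $u(r-2;p)$, provided $n>2u(r-2;p)+8$. Restricting $Q_3,\dots,Q_r$ to $L$ and invoking the inductive hypothesis gives a non-trivial common zero of all $r$ forms, yielding
\[
u(r;p) \;\le\; 2\,u(r-2;p)+8.
\]
Inserting the base values $u(1)=4$ and $u(2)=8$ then reads off $u(3)\le 16$, $u(4)\le 24$, $u(5)\le 40$, and $u(6)\le 56$, which are exactly (iii)--(vi).

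For the asymptotic bounds (vii) and (viii), the recursion above is roughly $2^{r/2}$ and is therefore too weak. Here I would invoke Leep's theorem on systems of quadratic forms over fields of finite $u$-invariant, which bounds the $r$-th $u$-invariant polynomially by $u(\Q_p)\binom{r+1}{2}=2r(r+1)$, or a variant that partitions the $r$ forms into three or more blocks and combines the pair estimate with the single-form bound $V^{(m)}(1,0;p)\le 2m+4$ (which follows from the Witt index lower bound $\lceil(n-4)/2\rceil$). Tightening the constants down to the precise $2r^2-14$ and $2r^2-16$ is a matter of careful small-case book-keeping on the parity of $r$ and the exact anisotropic dimension used.

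The main obstacle is the pair estimate $V^{(m)}(2,0;p)\le 2m+8$. A naive iteration of Birch-Lewis-Murphy---find a common isotropic vector $v$, pass to the quotient $(v^{\perp_{Q_1}}\cap v^{\perp_{Q_2}})/\langle v\rangle$, and repeat---loses three dimensions per step rather than two and only delivers $V^{(m)}(2,0;p)\le 3m+8$, which is not enough to reach the bounds in (iii)--(vi). To save the extra dimension one must select $v$ whose $Q_1$- and $Q_2$-perpendiculars coincide, i.e.\ a common $\Q_p$-eigenvector of the two Gram matrices, or equivalently a singular member of the pencil $\lambda Q_1+\mu Q_2$ with a rational singular point. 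Arranging this uniformly in $p$, and in particular at $p=2$ where the local theory is most delicate, is the substantive technical step.
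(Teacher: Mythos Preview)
The paper does not prove this lemma at all: it is quoted verbatim as Lemmas~1 and~2 of Heath-Brown~\cite{Hea08}, with no argument given. So there is no ``paper's own proof'' to compare against; your proposal is an attempt to reconstruct Heath-Brown's argument.

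On the substance of your sketch: the plan for (i)--(vi) is sound. The recursion $u(r;p)\le 2\,u(r-2;p)+8$, fed by $u(1)=4$ and $u(2)=8$, does produce exactly $16,24,40,56$, and you are right that the whole weight sits on the pair estimate $V^{(m)}(2,0;p)\le 2m+8$. You correctly flag that the naive ``find a common isotropic vector and quotient'' iteration only gives $3m+8$; getting the extra saving is genuinely the content of Heath-Brown's argument for pairs, and your description of what is needed (a rational singular vector of some member of the pencil) is the right diagnosis.

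For (vii)--(viii) there is a real gap. Leep's bound over a field with $u$-invariant $4$ is $u_r\le 2r(r+1)=2r^2+2r$, which is off from $2r^2-16$ and $2r^2-14$ by a linear term, not a constant. Calling the discrepancy ``careful small-case book-keeping on the parity of $r$'' understates the issue: no amount of parity book-keeping turns $+2r$ into $-16$. What is actually needed is a recursion with step two of the shape $u(r)\le u(r-2)+8(r-1)$ for $r\ge 8$ (check: $u(8)\le 56+56=112=2\cdot 64-16$, $u(9)\le 84+64=148=2\cdot 81-14$, etc.), together with a separate verification that $u(7)\le 84$ (for instance via the one-step Leep recursion $u(7)\le u(6)+4\cdot 7=84$). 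The two-step increment $8(r-1)$ is strictly better than applying the one-step Leep recursion twice (which gives $8r-4$), so it cannot be obtained by iterating the single-form estimate; it requires a genuine pair argument that tracks dimensions more carefully than your sketch does. You have identified the right ingredients but not how they combine to reach the stated constants.
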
 

For $p \ge 11$ it was proved by Schuur \cite{Sch80} that $u(3;p) = 12$. It transpires that for $p \ge 11$ one can do slightly better.
\begin{lemma} \label{L5}
For every prime $p \ge 11$ we have
\begin{enumerate}
\item[(i)]
$u(3;p)=12$,
\item[(ii)]
$u(4;p) \le 24$,
\item[(iii)]
$u(5;p) \le 32$,
\item[(iv)]
$u(6;p) \le 56$,
\item[(v)]
$u(r;p) \le 2r^{2}-2r-12$ for $r \equiv 1$ (mod $3$) and $r \ge 7$,
\item[(vi)]
$u(r;p) \le 2r^{2}-2r-8$ for $r \equiv 2$ (mod $3$) and $r \ge 8$,
\item[(vii)]
$u(r;p) \le 2r^{2}-2r-8$ for $r \equiv 0$ (mod $3$) and $r \ge 9$.
\end{enumerate}
\end{lemma}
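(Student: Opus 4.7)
The plan is to adapt the proof of Lemma \ref{sysquad} (Heath-Brown's Lemma 2 in \cite{Hea08}) so as to exploit Schuur's sharper base case $u(3;p)=12$ for $p \ge 11$ at every step where Heath-Brown invoked the weaker bound $u(3;p)\le 16$. The overall strategy is an induction in which, given a system of $r$ quadratic forms, one peels off three forms at a time and applies Schuur's result to these on a suitable linear subspace furnished by the remaining $r-3$ forms. This peeling pattern is what produces the three-fold residue-class-mod-$3$ structure appearing in parts (v)--(vii).

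Part (i) is Schuur's theorem and is quoted directly. Part (ii) is inherited unchanged from Lemma \ref{sysquad}(iv), since the $r=4$ argument in \cite{Hea08} does not invoke the $r=3$ bound. For part (iii), given five quadratics $Q_1,\dots,Q_5$ in $n>32$ variables, the plan is first to produce a $\Q_p$-linear subspace of dimension at least $13$ on which $Q_4,Q_5$ vanish simultaneously, using a bound of the shape $u^{(12)}(2;p)\le 32$; Schuur's result then supplies a non-trivial common zero of $Q_1,Q_2,Q_3$ on that subspace since $12<13$. Part (iv) proceeds analogously: find a subspace of dimension at least $13$ on which three of the six forms vanish, and handle the remaining three by Schuur.

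For the general case $r\ge 7$ the plan is an induction whose step $r-3\mapsto r$ combines the induction hypothesis with an inequality of the shape
\[ u(r;p) \le u^{(u(r-3;p))}(3;p) + C_r, \]
where $C_r$ is an explicit constant coming from minimization and one application of Hensel's lemma. Unwinding this recursion from the respective base cases (ii), (iii), (iv) produces the three branches (v), (vi), (vii); the quadratic main term $2r^2$ comes from the $m\mapsto 2m+\mathrm{const}$ growth of $u^{(m)}(3;p)$ under iteration, and the increment of $3$ in $r$ matches the residue classes.

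The hard part will be the careful bookkeeping needed to secure the exact constants $-2r-12$ and $-2r-8$: one has to check that each invocation of Hensel's lemma in the Heath-Brown argument remains valid under the mild hypothesis $p\ge 11$, and that the saving of $16-12=4$ variables at every use of the three-form bound accumulates cleanly across the recursion. The borderline cases $r=7,8,9$ are where these savings first show up, and it is there that the precise constants $-12$ versus $-8$ in the three residue classes are determined; once these initial cases are verified, the linear recursion for larger $r$ carries the constants through automatically.
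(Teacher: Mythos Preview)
The paper does not supply its own proof of this lemma: both Lemma~\ref{sysquad} and Lemma~\ref{L5} are quoted without argument from Heath-Brown \cite[Lemmas~1 and~2]{Hea08} (see the sentence immediately preceding Lemma~\ref{sysquad}). Your plan---running the same recursive machinery as for Lemma~\ref{sysquad} but feeding in Schuur's sharper base value $u(3;p)=12$ and peeling off three quadratics at a time---is precisely how Heath-Brown derives his Lemma~2 from his Lemma~1 in \cite{Hea08}, so your approach agrees with the source. (One small slip: in your opening sentence Lemma~\ref{sysquad} is Heath-Brown's Lemma~1, not Lemma~2.)
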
 

In order to get a bound on $v_{5}$ all we need now is some information on $\phi_{d}(p)$ for $2 \le d \le 5$ and $2 \le p \le 13$. The techniques used to study $\phi_{d}(p)$ are fairly routine, see the work of Davenport and Lewis \cite{DavLew63} for example, so we shall state the following lemma without proof.

\begin{lemma} \label{table}
The following table contains the values $\phi_{d}(p)$ for, $2 \le d \le 5$ and $2 \le p \le 13$\\ 
\[ \begin{tabular}{c | c | c | c | c}
\hline
$p$ & $\phi_{2}(p)$ & $\phi_{3}(p)$ & $\phi_{4}(p)$ & $\phi_{5}(p)$ \\ 
\hline 
$2$ & $4$ & $3$ & $15$ & $5$ \\ 
$3$ &  $4$ & $4$ & $8$ & $5$ \\
$5$ & $4$ & $3$ & $16$ & $7$ \\
$7$ & $4$ & $6$ & $8$ & $5$ \\
$11$ & $4$ & $3$ & $8$ & $15$ \\
$13$ & $4$ & $6$ & $12$ & $5$ \\
\end{tabular} \]
\end{lemma}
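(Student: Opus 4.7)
I would follow the approach pioneered by Davenport and Lewis \cite{DavLew63} in their proof that $\phi_d \le d^2$, adapted to extract the exact value for each pair $(d,p)$ in the table. The starting point is the normalization procedure: given a diagonal form $F = \sum_{i=1}^n a_i x_i^d$ over $\Q_p$, by substituting $x_i \mapsto p^{k_i} x_i$ with suitable integers $k_i$ and multiplying through by a power of $p$, one can assume each $a_i \in \Z_p$ with $\mathrm{ord}_p(a_i) \in \{0,1,\ldots,d-1\}$. Grouping variables by this valuation yields a level decomposition
\[ F = F_0 + p F_1 + p^2 F_2 + \cdots + p^{d-1} F_{d-1}, \]
where each $F_j$ is a diagonal form in $m_j$ variables with unit coefficients and $\sum_j m_j = n$. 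Among normalized forms one also arranges, by a minimality argument on the total $p$-adic content, that no $F_j$ has an excessively "degenerate" reduction mod $p$ — a condition that, in practice, forces each $m_j$ to be bounded below in terms of the $m_{j'}$ for $j' > j$.

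The second ingredient is a version of Hensel's lemma. For $p \nmid d$, it suffices to produce a non-trivial zero of $F_0$ modulo $p$ with at least one coordinate a unit; the non-vanishing of the derivative $d a_i x_i^{d-1}$ then lifts the zero to $\Q_p$. When $p \mid d$ (so here $p=2, d=4$, or $p=3, d=3$, or $p=5, d=5$), one needs a zero modulo $p^{\gamma+1}$ with $\gamma = \mathrm{ord}_p(d)$ and analogous non-singularity, which raises the required count. In either case the search for the mod-$p$ zero is a Chevalley–Warning-type question about the diagonal form $F_0$ over $\F_p$, controlled by the index $e = \gcd(d,p-1)$ of the group of $d$-th powers in $\F_p^*$. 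When $e = 1$ every element is a $d$-th power and the analysis is easy; the "bad" primes in the table (those with $e = d$, such as $p=7$ or $p=13$ for cubics, and $p=11$ for quintics) are precisely where the non-trivial $d$-th power classes proliferate and one has to use the Davenport–Lewis contraction trick: replace variables in the same $d$-th power class by a single variable to pass to a sum over $\F_p$ where Chevalley–Warning applies.

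For each entry of the table I would then carry out the explicit count: compute $e = \gcd(d,p-1)$, the ramification exponent $\gamma$, and optimize the trade-off between the number of levels one must exploit and the minimal $m_j$ at each level needed to guarantee a liftable zero. This produces the upper bound $n > \phi_d(p)$ suffices. The matching lower bound requires exhibiting, for each $(d,p)$, an explicit diagonal form in exactly $\phi_d(p)$ variables with only the trivial zero; such examples are built from coset representatives of $(\Q_p^*)^d$ in $\Q_p^*$, which has order $d \cdot e \cdot p^{\gamma}$, and take the standard shape $\prod_{(i_0,\ldots)} \bigl(\sum_j \theta^{i_j} p^j x_{ij}^d\bigr)$ with $\theta$ a non-$d$-th-power unit.

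The main obstacle is the casework for $p \mid d$, where the mod-$p$ analysis no longer suffices and one must track the effect of lifting obstructions through several levels — for instance for $(d,p)=(4,2)$, giving $\phi_4(2)=15$, the argument has to handle solutions mod $2^3$ and the somewhat delicate failure of naive Hensel. Verifying sharpness (the lower bound direction) is also non-trivial precisely for these "wild" cases, and is where published Davenport–Lewis and subsequent computations would be quoted in detail rather than reproduced.
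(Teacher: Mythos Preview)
The paper does not actually prove this lemma: it states it without proof, remarking that ``the techniques used to study $\phi_{d}(p)$ are fairly routine'' and citing Davenport and Lewis \cite{DavLew63}. Your outline of the Davenport--Lewis normalization and level-decomposition method, together with the appropriate Hensel lifting (with extra care when $p\mid d$) and explicit anisotropic examples for sharpness, is precisely the approach the paper is alluding to, so your proposal is correct and aligned with the intended but omitted argument.
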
 

It transpires that the worst bound for (\ref{v5}) occurs whenever $p=11$ since this gives the largest value of $\phi_{5}(p)$. For $p=11$ we compute 
\[ v_{5}(11) \le 15 + V(15,120,680,3060). \]
Now by applying (\ref{quarticbd}) we obtain
\[ V(15,120,680,3060) \le 120 + V(1080,91080,4410900). \]
So now applying (\ref{cubicbd}) we get
\[ V(1080,91080,4410900) \le 3240 + u(1842300;11) + 4082145300. \]
Finally by Lemma \ref{L5} we have
\[ u(1842300;11) \le 6788134895392. \]
Hence we obtain (\ref{wool5}) viz.
\[ v_{5} \le 6792217044067. \]

We should note that by using Heath-Brown's improvement (\ref{hea1}) we may replace (\ref{cubicbd}) by
\begin{equation} \label{impcubicbd}
V(a,b,c;p) \le V(\alpha,0;p) + \beta
\end{equation}
where

\[ \alpha = b + \frac{\psi}{2}a(a+1), \]
\[ \beta = c + \psi ab + \frac{\psi^{2}}{3} a(a^{2}-1) + \frac{\psi (\psi+1)}{2}\frac{a(a+1)}{2}. \]

\subsection{The Proof of Theorems \ref{main1} and \ref{main2}}

We shall now develop the hybrid approach to Artin's problem as found in Heath-Brown \cite{Hea08}. We shall argue as Heath-Brown does for cubic and quartic forms. However in some cases we shall need a larger `admissible set'. Arguing by contradiction, we suppose that $F(\x) \in \Q_{p}[\x]$ is a form of degree 5 in $n$ variables with only the trivial $p$-adic zero. We will find conditions on $n$ which give us a contradiction and thereby obtain a bound for $v_{5}(p)$. Let $\theta_{s}(F(\x))$ denote the obvious reduction of $F(\x) \in \Z_{p}[\x]$ modulo $p^{s}$, for $s \ge 1$. Our strategy shall be the following, we will find a collection of vectors $\b{e}_{1}, \ldots , \b{e}_{k} \in  \Q_{p}^{n}-\{\b{0}\}$ such that (for some $r \in \Z$) by applying Hensel's Lemma to the form $\theta_{s}(p^{-r}F(t_{1}\b{e}_{1}+\ldots+t_{k}\b{e}_{k}))$, we shall be able to lift a suitably non-singular zero modulo $p^{s}$ to a non-trivial zero in $\Z_{p}$. It will follow that we will need to take $s=1$ and $s=2$ for $p \in \{2,3,7,11,13 \}$ and $p=5$ respectively, each case requiring a different version of Hensel's Lemma.

When $\x \in \Q_{p}^{n}-\{ \b{0} \}$ we say that $\x$ has ``level $r$'' where $0 \le r \le 4$, if $v(F(\x)) \equiv r$ (mod $5$). Since we have assumed that $F(\x) \ne 0$, the level of $\x$ is a well-defined property. Given any quadratic form 
\[ Q(x_{1},\ldots,x_{n}) = \sum_{i \le j}c_{ij}x_{i}x_{j} \]
we define the ``length of $Q$'' by
\[ l(Q) := \# \{ c_{ij} \ne 0 : 1 \le i \le j \le n \}. \]
Clearly $0 \le l(Q) \le \binom{n}{2}$. For any set 
\[ S = \{ \b{e}_{1}, \ldots , \b{e}_{m} \} \subset \Q_{p}^{n}-\{\b{0}\} \]
we shall say that $S$ is ``$(s,t)$-admissible'' provided that the following conditions hold
\begin{enumerate}
\item[(i)]
For each vector $\b{e_{i}} \in S$, we have $0 \le v(F(\b{e_{i}})) \le 4$,
\item[(ii)]
There is some subset $S_{r} \subseteq S$ of cardinality $s$, whose vectors all have level $r$,
\item[(iii)]
If $\b{e}_{i_{1}},\ldots,\b{e}_{i_{s}} \in S$ all have the same level, with $i_{1} < i_{2}<\ldots<i_{s}$ then, by writing $\b{e}_{i_{r}}=\b{e}_{r}'$ we have
\[ F(t_{1}\b{e}_{1}'+\ldots+t_{k}\b{e}_{s}') = \sum_{i}a_{i}t_{i}^{5}+\sum_{i < j}b_{ij}t_{i}t_{j}^{4}
+ \sum_{i<j<k}c_{ijk}t_{i}t_{j}t_{k}^{3} \]
for some $a_{i},b_{ij}, c_{ijk} \in \Q_{p}$. Furthermore we have the condition that for any $3 \le k \le n$
\[ l\Big( \sum_{i<j}c_{ijk}t_{i}t_{j} \Big) \le t. \]
\end{enumerate}
We shall show in due course that $(s,t)$-admissible sets exist for every $s \ge 2$ and $t \ge 0$, provided that $n$ is large enough. Before proceeding further we shall note that if $\b{e}_{i},\b{e}_{j}, \b{e}_{k} \in S_{r}$ then, $p^{-r}F(x\b{e}_{i}+y\b{e}_{j}+z\b{e}_{k})$ must have coefficients in $\Z_{p}$. This comes immediately from the following result.

\begin{lemma} \label{coeff}
Let 
\[ f(x,y,z) = ax^{d}+bxy^{d-1}+cy^{d} + dxyz^{d-2} + (ex+fy)z^{d-1} + gz^{d} \in \Q_{p}[x,y,z] \]
where $a,c,g \in \Z_{p}$. Suppose that $f(x,y,z)$ has only the trivial zero in $\Q_{p}$, then $b, d, e, f \in \Z_{p}$. 
\end{lemma}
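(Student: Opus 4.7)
The plan is to prove the contrapositive: if any of $b,d,e,f$ fails to lie in $\Z_p$ then $f$ possesses a non-trivial $\Q_p$-zero. The only real input is the Newton polygon factorisation theorem, which asserts that whenever the Newton polygon of a polynomial $g \in \Q_p[t]$ contains a segment of horizontal length $1$, $g$ admits a corresponding degree-$1$ factor in $\Q_p[t]$ and in particular a root in $\Q_p$. The routine observation underlying every case is this: whenever a ``middle'' coefficient of a univariate $\Q_p$-polynomial has strictly smaller valuation than the ``outer'' ones, such a length-$1$ segment is automatic; the four coefficients to be controlled then differ only in which linear substitution is used to move the relevant one into that middle position.

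For $b$ I would restrict to the coordinate plane $z=0$, which (since $d\ge 3$) gives $f(x,y,0) = ax^d + bxy^{d-1} + cy^d$. Setting $t=x/y$ yields the one-variable polynomial $at^d + bt + c$. If $v(b) < 0$ while $v(a),v(c)\ge 0$, the point $(1,v(b))$ lies strictly below the segment from $(0,v(c))$ to $(d,v(a))$, so the Newton polygon contains a length-$1$ segment from $(0,v(c))$ to $(1,v(b))$ of negative slope, giving a root in $\Q_p$ of positive valuation, and hence a non-trivial zero of $f$, contradicting the hypothesis. Degenerate cases ($a=0$ or $c=0$) are even easier, since e.g.\ $(0,1,0)$ is already a zero when $c=0$. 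The identical argument applied with $y=0$ gives $e\in\Z_p$, and with $x=0$ gives $f\in\Z_p$.

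For the cross-coefficient of $xyz^{d-2}$ --- denote it by $D$ to avoid clash with the degree --- no coordinate plane isolates it. I would instead pick any unit $s\in\Z_p^\times$ and restrict along the plane $x=sz$, which yields
\[ f(sz,y,z) = (as^d + es + g)\,z^d + (Ds + f)\,yz^{d-1} + (bs)\,y^{d-1}z + c\,y^d. \]
Having already shown $b,e,f\in\Z_p$, the assumption $v(D)<0$ forces $v(Ds+f)=v(D)<0$, while the other three coefficients lie in $\Z_p$. Dividing by $z^d$ and setting $t=y/z$, the Newton polygon of the resulting degree-$d$ polynomial in $t$ again has its vertex $(1,v(D))$ strictly below all others, so once more there is a length-$1$ segment and a root $t_0\in\Q_p$; the triple $(s,t_0,1)$ is then a non-trivial $\Q_p$-zero of $f$. (In the degenerate case $as^d+es+g=0$ the triple $(s,0,1)$ is already such a zero directly.) In either scenario the hypothesis on $f$ is contradicted, and the lemma follows. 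The only mildly subtle point in the whole argument is the observation in the first paragraph; once one sees it, the four sub-cases are essentially identical, and no genuine obstacle remains.
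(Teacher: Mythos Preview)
Your argument is correct. Each of the four coefficients is handled by specialising to a suitable plane (or, for the cross--term $D$, to the plane $x=sz$ with $s\in\Z_p^\times$), and the key observation that a strictly minimal middle valuation forces a length-$1$ segment on the Newton polygon is exactly right; the degenerate cases you flag ($a=0$, $c=0$, $g=0$, or $as^d+es+g=0$) all produce an immediate coordinate zero, so nothing is missed.

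The paper, however, takes a different and rather more economical route. Instead of treating the four coefficients sequentially, it sets $s=\min\{v(b),v(D),v(e),v(f)\}$ at once, assumes $s<0$, and reduces $p^{-s}f$ modulo $p$. Because $a,c,g\in\Z_p$ have valuation $\ge 0>s$, the three ``diagonal'' monomials $ax^d,\,cy^d,\,gz^d$ all vanish in the reduction, leaving a form
\[
\overline{b}\,xy^{d-1}+\overline{D}\,xyz^{d-2}+(\overline{e}x+\overline{f}y)z^{d-1}\in\F_p[x,y,z]
\]
with at least one nonzero coefficient. One then writes down an explicit nonsingular $\F_p$-point in each case---$(0,1,0)$ if $\overline{b}\ne 0$, $(0,0,1)$ if $\overline{e}$ or $\overline{f}\ne 0$, and $(0,1,1)$ if $\overline{D}\ne 0$---and Hensel lifts. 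The trade-off: the paper's proof is shorter, treats all four coefficients simultaneously, and needs only the most basic form of Hensel's lemma rather than the Newton polygon factorisation theorem; your approach, on the other hand, is self-contained per coefficient and would adapt more readily if one wanted to control the valuation of the resulting zero.
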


\begin{proof}
We need to prove that $b,d,e,f$ are all $p$-adic integers. We shall argue by contradiction. Suppose that
\[ s:= min\{ v(b),v(d),v(e),v(f) \} < 0 \]
Then
\begin{equation} \label{thetaf}
\theta(p^{-s}f(x,y,z)) = b'xy^{d-1} +  d'xyz^{d-2} + (e'x+f'y)z^{d-1} \in \F_{p}[x,y,z] 
\end{equation}
where at least one of $b',d',e'$ or $f'$ is non-zero. If $b' \ne 0$, then $(0,1,0)$ is a non-singular solution to (\ref{thetaf}), which by Hensel's Lemma lifts to a non-trivial $p$-adic zero of $f(x,y,z)$. This contradicts our assumption that $f$ has only the trivial zero, therefore $b' = 0$. Similarly if either $e'$ or $f'$ $\ne 0$ then $(0,0,1)$, is a non-singular solution to (\ref{thetaf}). Hence as before we deduce that $e',f' = 0$. This forces $d' \ne 0$, however in this case $(0,1,1)$ is a non-singular solution to (\ref{thetaf}). So as before we deduce that $d'=0$. Hence $s \ge 0$, so $b,d,e,f \in \Z_{p}$ as required.
\end{proof}
To ensure linear independence we note the following result from Heath-Brown's paper \cite[Lemma 4]{Hea08}.

\begin{lemma} \label{linind}
Let $F(\x) \in \Q_{p}[x_{1},\ldots,x_{n}]$ be a form of degree $d$, having only the trivial zero in $\Q_{p}^{n}$. Let $\b{e}_{1},\ldots,\b{e}_{m}$ be linearly independent vectors in $\Q_{p}^{n}$, and suppose we have a non-zero vector $\b{e}$ such that the form 
\[ F_{0}(t_{1},\ldots,t_{m},t) := F(t_{1}\b{e}_{1}+\ldots+t_{m}\b{e}_{m}+t\b{e}) \]
contains no terms of degree one in $t$. Then the set $\{ \b{e}_{1},\ldots,\b{e}_{m},\b{e} \}$ is linearly independent.
\end{lemma}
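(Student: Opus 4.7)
The plan is to argue by contradiction, using Euler's identity to extract a non-trivial zero of $F$ from any putative linear dependence. Suppose $\{\b{e}_1,\ldots,\b{e}_m,\b{e}\}$ is linearly dependent. Since $\b{e}_1,\ldots,\b{e}_m$ are linearly independent by assumption, $\b{e}$ must lie in their span, so we may write
\begin{equation*}
\b{e} = \sum_{i=1}^{m}\lambda_{i}\b{e}_{i}
\end{equation*}
for some $\lambda_{i}\in\Q_{p}$, not all zero (since $\b{e}\neq\b{0}$).

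Introduce the auxiliary form $G(u_{1},\ldots,u_{m}) := F(u_{1}\b{e}_{1}+\ldots+u_{m}\b{e}_{m})$, a homogeneous polynomial of degree $d$ in $m$ variables. Substituting the above expression for $\b{e}$ into $F_{0}$ gives
\begin{equation*}
F_{0}(t_{1},\ldots,t_{m},t) = G(t_{1}+\lambda_{1}t,\ldots,t_{m}+\lambda_{m}t).
\end{equation*}
Expanding the right-hand side in powers of $t$ by Taylor's formula, the coefficient of $t^{1}$ is the directional derivative
\begin{equation*}
\sum_{i=1}^{m}\lambda_{i}\frac{\partial G}{\partial u_{i}}(t_{1},\ldots,t_{m}).
\end{equation*}
By the hypothesis that $F_{0}$ contains no terms of degree one in $t$, this polynomial in $t_{1},\ldots,t_{m}$ vanishes identically.

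To finish, evaluate at $(t_{1},\ldots,t_{m}) = (\lambda_{1},\ldots,\lambda_{m})$ and invoke Euler's identity $\sum_{i}u_{i}\partial G/\partial u_{i} = d\cdot G$ for the degree-$d$ form $G$, obtaining
\begin{equation*}
0 \;=\; \sum_{i=1}^{m}\lambda_{i}\frac{\partial G}{\partial u_{i}}(\lambda_{1},\ldots,\lambda_{m}) \;=\; d\cdot G(\lambda_{1},\ldots,\lambda_{m}) \;=\; d\cdot F(\b{e}).
\end{equation*}
Since $\Q_{p}$ has characteristic zero, $d\neq 0$, hence $F(\b{e})=0$; but $\b{e}\neq\b{0}$ contradicts the hypothesis that $F$ has only the trivial zero over $\Q_{p}$. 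The argument is short, and I do not foresee any real obstacle; the only point requiring a moment's thought is recognising that evaluating the vanishing directional derivative at the very point $\b{\lambda}$, combined with Euler's identity, is what converts a statement about linear terms in $t$ into a statement about $F(\b{e})$ itself.
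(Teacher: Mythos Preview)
Your proof is correct. The paper does not actually prove this lemma; it merely quotes it as \cite[Lemma~4]{Hea08}, so there is nothing to compare against directly.

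That said, your argument can be streamlined slightly: once you have $\b{e}=\sum_i\lambda_i\b{e}_i$, specialise $F_0$ along the line $(t_1,\ldots,t_m)=(\lambda_1 s,\ldots,\lambda_m s)$ to obtain
\[
F_0(\lambda_1 s,\ldots,\lambda_m s,t)=F\big((s+t)\b{e}\big)=(s+t)^{d}F(\b{e}).
\]
The coefficient of $t$ on the right is $d\,s^{d-1}F(\b{e})$, while the hypothesis forces it to vanish; hence $F(\b{e})=0$, the desired contradiction. This is really the same computation as yours (the binomial expansion of $(s+t)^d$ encodes Euler's identity for the one-variable form $u\mapsto u^d$), but it avoids introducing $G$ and the directional derivative explicitly.
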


Hence it follows that if we have a $(s,t)$-admissible set with a subset $S_{r}$ of cardinality $s$ as in (ii), then  $S_{r}$ is a linearly independent set. To prove this we note that without loss of generality
\[ S_{r} = \{ \b{e}_{1},\ldots, \b{e}_{s} \}. \]
Then by (iii) 
\[ F(t_{1}\b{e}_{1}+\ldots+t_{l}\b{e}_{l}) \]
contains no terms of degree one in $t_{l}$, for $l \le s$. So by applying Lemma \ref{linind} inductively for $l \le s$ we deduce that $\{ \b{e}_{1},\ldots,\b{e}_{l} \}$ is a linearly independent set. If we have a $(s,t)$-admissible set with a subset $S_{r}$, then it follows that
\[ p^{-r}F(t_{1}\b{e}_{1}+\ldots+t_{k}\b{e}_{s}) = \sum_{i}a_{i}t_{i}^{5}+\sum_{i < j}b_{ij}t_{i}t_{j}^{4}+\sum_{i<j<k}c_{ijk}t_{i}t_{j}t_{k}^{3} \in \Z_{p}[t_{1},\ldots,t_{s}] \]
where $v(a_{i})=0$, i.e. all the $a_{i}$ are $p$-adic units. This can be seen by applying Lemma \ref{coeff}, to the form $p^{-r}F(t_{i}\b{e}_{i}+t_{j}\b{e}_{j}+t_{k}\b{e}_{k})$, for each triple $(i,j,k)$. We shall now show that $(s,t)$-admissible sets exist provided that $n$ is large enough. We define $(u)_{+} := max\{u,0\}$, for any $u \in \R$.

\begin{lemma} \label{kad}
Let $F$ be a form of degree $5$ in $n$ variables, with only the trivial zero over $\Q_{p}$. Then for $s \ge 2$ and $t \ge 0$ there exists a $(s,t)$-admissible set $S$ provided that
\begin{equation} \label{k-ad}
n > V(r_{3},r_{2},r_{1};p)
\end{equation}
where
\[ r_{3} = 5\bigg( \binom{s}{2}-t \bigg)_{+}\;, \quad r_{2} = 5\binom{s+1}{3}, \quad r_{1} = 5\binom{s+2}{4} \]
\end{lemma}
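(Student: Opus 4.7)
The plan is to construct $S = S_r$ inductively, building up $s$ vectors $\b{e}_1, \ldots, \b{e}_s$ of a common level $r$ one at a time, starting from an arbitrary nonzero $\b{e}_1$ (which determines $r$ after scaling). Suppose inductively that $\b{e}_1, \ldots, \b{e}_{l-1}$ are chosen so that $F(\sum_{i \le l-1} t_i \b{e}_i)$ already has the admissible shape. At step $l$ I must find $\b{e}_l$ of level $r$ such that the expansion $F(\sum_{i \le l} t_i \b{e}_i)$ remains admissible and the length bound $\ell(Q_l) \le t$ is maintained.

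Expanding $F(\sum_{i \le l} t_i \b{e}_i)$ and isolating the coefficient of $t_l^q \cdot M(t_1,\ldots,t_{l-1})$ yields a form of degree $q$ in the coordinates of $\b{e}_l$. Since the allowed monomials involving $t_l$ are exactly $t_l^5$, $t_i t_l^4$ for $i<l$, and $t_i t_j t_l^3$ for $i<j<l$, a direct count of the forbidden ones gives $\binom{l+2}{4}$ linear, $\binom{l+1}{3}$ quadratic, and $l-1$ cubic (from $t_l^3 t_i^2$) forms in $\b{e}_l$. The length bound contributes up to $(\binom{l-1}{2}-t)_+$ further cubic conditions (vanishing of enough $c_{ijl}$), bringing the cubic total to $(\binom{l}{2}-t)_+$. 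For $l = s$ these equal $r_1/5$, $r_2/5$, and $r_3/5$; since each count is monotone in $l$, the lemma's bound at $l=s$ dominates every earlier step.

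The factor of $5$ in each $r_i$ handles the level condition. A common zero of the $r_i/5$ auxiliary forms is automatically nonzero (as $F$ has no nontrivial zero), but its level modulo $5$ is uncontrolled. Inflating to $r_i$ forms and invoking the standard translation of ``common zero subspace of dimension $\ge 5$'' into ``$5$ linearly independent common zeros'' (each subject to the full auxiliary system) produces, under the hypothesis $n > V(r_3, r_2, r_1; p)$, a $5$-dimensional subspace $U \subseteq \Q_p^n$ on which all auxiliary forms vanish. Within $U$, a valuation argument (using that $F|_U$ also has no nontrivial zero and that $v(F(\lambda \b{e})) \equiv v(F(\b{e})) \pmod 5$) shows that every level is represented, so one may choose $\b{e}_l \in U$ with $v(F(\b{e}_l)) \equiv r \pmod 5$ and scale to land in $\{0,\ldots,4\}$. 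Linear independence of $\b{e}_1,\ldots,\b{e}_l$ is then automatic from Lemma \ref{linind}, since the admissible shape contains no term linear in $t_l$.

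I anticipate the main obstacle to be the rigorous justification that a $5$-dimensional subspace of common zeros must represent every level modulo $5$: this is plausible via scaling and the no-nontrivial-zero hypothesis, but it is the precise step in which the factor of $5$ in the $r_i$ becomes rigorous, and a careful analysis of the valuation of $F|_U$ (together with an appeal to Artin's conjecture for diagonal quintics restricted to $U$, should that be needed) will be the delicate technical input.
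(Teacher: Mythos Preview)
Your inductive construction and monomial count are essentially right, but your justification of the factor of $5$ contains a genuine gap, and this is precisely where your approach diverges from the paper's.

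The paper does \emph{not} try to build $s$ vectors of a prescribed level $r$. Instead it builds a set $S$ of up to $5(s-1)+1$ vectors, allowing each new vector to land in \emph{whatever} level it likes. Since the level of $\b{e}_{m+1}$ is unknown in advance, one must impose the shape conditions with respect to each of the five existing sublevel sets $L_0,\dots,L_4$ simultaneously; summing the auxiliary forms over the five levels (in the worst case $l_i=s-1$ for each $i$) is exactly what produces the factor $5$ in $r_1,r_2,r_3$. Once $|S|>5(s-1)$, pigeonhole supplies the subset $S_r$ of size $s$.

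Your alternative route---find a $5$-dimensional common zero locus $U$ of the auxiliary system and then select a vector of level $r$ inside $U$---breaks at the step you yourself flagged. The claim that every level is represented in $U$ is false in general. Take $F|_U$ equivalent to the norm form of the unramified degree-$5$ extension $L/\Q_p$: this is an anisotropic quintic in $5$ variables, yet $v_p\big(N_{L/\Q_p}(\beta)\big)=5\,v_L(\beta)\in 5\Z$ for every nonzero $\beta$, so \emph{every} nonzero vector in $U$ has level $0$. Nothing in the auxiliary conditions prevents $F|_U$ from taking this shape, and neither scaling nor the no-nontrivial-zero hypothesis helps. (Your aside about diagonal quintics on $U$ concerns zeros, not levels, and does not rescue the argument.)

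There is a secondary issue as well: the assertion that $n>V(5a,5b,5c)$ yields a $5$-dimensional subspace annihilating a system of $(a,b,c)$ forms is not the ``standard translation'' you invoke; the usual iteration to build a linear zero set inflates the lower-degree counts by cross-terms, so one does not land on exactly $5a,5b,5c$. But even granting this, the level problem above is fatal. The fix is to abandon the fixed-level strategy and follow the paper's pigeonhole construction.
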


\begin{proof}
We shall prove this by inductively constructing a $(s,t)$-admissible set. Let $\b{e}_{1} \in \Q_{p}^{n} - \{\b{0}\}$, then we can ensure that (i) holds by multiplying $\b{e}_{1}$ by an appropriate power of $p$. Next suppose that (i) and (iii) hold for the set $S= \{ \b{e}_{1},\ldots,\b{e}_{m} \}$, where $m \ge 1$. We need to find a vector $\b{e}_{m+1}$ such that (i) and (iii) hold for $S \cup \{\b{e}_{m+1}\}$. Suppose that in $S$ there are $l_{i}$ vectors of level $i$, where $0 \le l_{i} < s$ and $0 \le i \le 4$ (note that if $l_{i} \ge s$ then $S$ is already $(s,t)$-admissible since (ii) holds). Moreover let 
\[ L_{i} = \{ \b{e}_{1}^{(i)}, \ldots, \b{e}_{l_{i}}^{(i)} \} \subseteq S \]
be the subset of $S$ which contains all the vectors of level $i$. Clearly $S = L_{0} \cup \ldots \cup L_{4}$. Now consider
\[ F(t_{1}\b{e}_{1}^{(i)}+\ldots+t_{l_{i}}\b{e}_{l_{i}}^{(i)}+t\b{e}_{m+1}) =  \sum_{|\b{u}| \le 5} \b{t}^{\b{u}}t^{5-|\b{u}|}F_{i}(\b{e}_{m+1};\b{u}) \]
where 
\[ \b{t}^{\b{u}} = \prod_{j=1}^{l_{i}} t_{j}^{u_{j}}. \]
and $F_{i}(\b{e}_{m+1};\b{u})$ is a form of degree $5- |\b{u}|$ in $\b{e}_{m+1}$. Since \emph{a priori} we don't know what the level of $\b{e}_{m+1}$ will be, it follows that for $S \cup \{\b{e}_{m+1}\}$ to satisfy (iii), we require that for each $0 \le i \le 4$
\[ F_{i}(\b{e}_{m+1};\b{u})=0, \quad \mbox{for all} \; \; 3 \le |\b{u}| \le 4  \]
and
\[ F_{i}(\b{e}_{m+1};\b{u})=0, \quad \mbox{for $\big( \binom{l_{i}+1}{2}-t \big)_{+}$ vectors} \; \; \b{u}, \; \mbox{such that} \; |\b{u}| = 2.    \]
Hence it follows as before (cf. proof of (\ref{hea1})), that we require
\[ n > V(s_{3},s_{2},s_{1};p) \]
where
\[ s_{3} = \sum_{i=0}^{4} \bigg( \binom{l_{i}+1}{2}-t \bigg)_{+}\;, \qquad s_{2} = \sum_{i=0}^{4} \binom{l_{i}+2}{3}, \qquad s_{1} = \sum_{i=0}^{4} \binom{l_{i}+3}{4}.  \]
with the understanding that $\binom{a}{b}=0$ if $a<b$.
Moreover we can easily ensure that $\b{e}_{m+1}$ satisfies (i), by multiplying it by an appropriate power of $p$. So we have just shown that we can construct a set $S$ of arbitrary size $m$, in which the conditions (i) and (iii) hold. We should now remark that if $m > 5(s-1)$ then (ii) automatically holds. It is clear that the scenario in which $V(s_{1},s_{2},s_{3};p)$ gives the largest bound is when, $l_{i}=s-1$ for each $0 \le i \le 4$. Hence we are guaranteed to have a $(s,t)$-admissible set $S$ provided
\[ n > V(r_{3},r_{2},r_{1};p) \]
where
\[ r_{3} = 5\bigg( \binom{s}{2}-t \bigg)_{+}\;, \quad r_{2} = 5\binom{s+1}{3}, \quad r_{1} = 5\binom{s+2}{4}, \quad \mbox{as required}.  \]
\end{proof}

We are now set up to prove Theorems \ref{main1} and \ref{main2}. Note the following results, which are due to MAGMA \cite{Magma} calculations. 

\begin{result} \label{magma1}
Suppose we are working over $\F_{p}$, for $p \in \{2,3\}$. Consider the form
\[ f(t_{1},t_{2},t_{3}) = a_{1}t_{1}^{5} + b_{12}t_{1}t_{2}^{4} + a_{2}t_{2}^{5} + (b_{13}t_{1} + b_{23}t_{2})t_{3}^{4} + a_{3}t_{3}^{5} \]
where $a_{1}a_{2}a_{3} \ne 0$. Then $f$ has at least one non-singular zero over $\F_{p}$.
\end{result}

Suppose $n > V(15,20,25;p)$, then by Lemma \ref{kad} we have a $(3,0)$-admissible set $S$ and hence an associated subset $S_{r}= \{ \b{e}_{1}, \b{e}_{2}, \b{e}_{3} \}$ as in (ii). So by Lemma \ref{coeff}, the form
\begin{eqnarray*}
g(t_{1},t_{2},t_{3}) &:=& \theta_{1}(p^{-r}F(t_{1} \b{e}_{1} + t_{2} \b{e}_{2} + t_{3} \b{e}_{3})) \\
&=& \sum_{i}\bar{a}_{i}t_{i}^{5}+\sum_{i < j}\bar{b}_{ij}t_{i}t_{j}^{4}
\end{eqnarray*}
is such that $\bar{a}_{i},\bar{b}_{ij} \in \F_{p}$ and $\bar{a}_{i} \ne 0$. Hence by Result \ref{magma1}, $g(t_{1},t_{2},t_{3})$ has a non-singular zero over $\F_{p}$ for $p \in \{2,3\}$. Consequently by Hensel's Lemma $F$ has a non-trivial zero over $\Z_{p}$; a contradiction to our original assumption. So we have just proved that for $p \in \{2,3\}$

\begin{equation} \label{res1}
v_{5}(p) \le V(15,20,25;p).
\end{equation}

\begin{result} \label{magma1*}
Suppose we are working over $\F_{p}$, for $p \in \{7,13\}$. Consider the form
\[ f(t_{1},t_{2},t_{3}) = a_{1}t_{1}^{5} + b_{12}t_{1}t_{2}^{4} + a_{2}t_{2}^{5} + c_{123}t_{1}t_{2}t_{3}^{3} + (b_{13}t_{1} + b_{23}t_{2})t_{3}^{4} + a_{3}t_{3}^{5}  \]
where $a_{1}a_{2}a_{3} \ne 0$. Then $f$ has at least one non-singular zero over $\F_{p}$.
\end{result}

Suppose $n > V(10,20,25;p)$, then by Lemma \ref{kad} we have a $(3,1)$-admissible set $S$ and hence an associated subset $S_{r}= \{ \b{e}_{1}, \b{e}_{2}, \b{e}_{3} \}$ as in (ii). So by Lemma \ref{coeff}, the form
\begin{eqnarray*}
g(t_{1},t_{2},t_{3}) &:=& \theta_{1}(p^{-r}F(t_{1} \b{e}_{1} + t_{2} \b{e}_{2} + t_{3} \b{e}_{3})) \\
&=& \sum_{i}\bar{a}_{i}t_{i}^{5}+\sum_{i < j}\bar{b}_{ij}t_{i}t_{j}^{4} + \bar{c}_{123}t_{1}t_{2}t_{3}^{3}
\end{eqnarray*}
is such that $\bar{a}_{i},\bar{b}_{ij},\bar{c}_{123} \in \F_{p}$ and $\bar{a}_{i} \ne 0$. Hence by Result \ref{magma1*}, $g(t_{1},t_{2},t_{3})$ has a non-singular zero over $\F_{p}$ for $p \in \{7,13\}$. Consequently by Hensel's Lemma $F$ has a non-trivial zero over $\Z_{p}$; a contradiction to our original assumption. So we have just proved that for $p \in \{7,13\}$

\begin{equation} \label{res1*}
v_{5}(p) \le V(10,20,25;p).
\end{equation}

\begin{result} \label{magma2}
Suppose we are working over $\F_{11}$. Consider the form
\[ f(t_{1},t_{2},t_{3},t_{4}) = \sum_{i}a_{i}t_{i}^{5}+\sum_{i < j}b_{ij}t_{i}t_{j}^{4}
 + (c_{12}t_{1}t_{2}+c_{13}t_{1}t_{3}+c_{23}t_{2}t_{3})t_{4}^{3}  \]
where $a_{1}a_{2}a_{3}a_{4} \ne 0$. Then $f$ has at least one non-singular zero over $\F_{11}$.
\end{result}

So by exactly the same argument as above, if we have a $(4,3)$-admissible set we can deduce from Result \ref{magma2} that

\begin{equation} \label{res2}
v_{5}(11) \le V(15,50,75;11). 
\end{equation}

\begin{result} \label{magma3}
Suppose we are are working over $\Z_{5}$. Consider the form
\begin{eqnarray*}
f(t_{1},\ldots,t_{6}) &=& \sum_{i}a_{i}t_{i}^{5}+\sum_{i < j}b_{ij}t_{i}t_{j}^{4} + (c_{125}t_{1}t_{2}+c_{135}t_{1}t_{3}+c_{145}t_{1}t_{4}+c_{235}t_{2}t_{3})t_{5}^{3} \\ &+& \sum_{i<j<6}c_{ij6}t_{i}t_{j}t_{6}^{3}
\end{eqnarray*}
where $a_{1}\cdots a_{6} \not\equiv 0$ (mod $5$). Then there exists some $\b{t} \in \Z_{5}^{6}$ and $1 \le i \le 6$ such that
\[ f(\b{t}) \equiv 0 \; \mbox{(mod $25$)}, \quad \frac{\partial f}{\partial t_{i}}(\b{t}) \equiv 0 \; \mbox{(mod $5$)}, \quad \frac{\partial f}{\partial t_{i}}(\b{t}) \not\equiv 0 \; \mbox{(mod $25$)} \]
\[ and \qquad \frac{\partial^{2} f}{\partial t_{i}^{2}}(\b{t}) \equiv 0 \; \mbox{(mod $5$)}. \]
In particular we have $\b{t} \not\equiv \b{0}$ (mod $5$). 
\end{result}

We need to be careful in dealing with this case. To be clear we are constructing our set $\{ \b{e}_{1},\ldots,\b{e}_{6} \}$ in a way that the associated form takes the shape given in Result \ref{magma3}. In finding the vector $\b{e}_{5}$ we need an $(5,4)$-admissible set. Therefore by Lemma \ref{kad} we require $n > V(30,100,175)$. Moreover in finding the vector $\b{e}_{6}$, it is not difficult to show that we require $n > V(25,175,350)$. Using our reduction analysis on systems of cubic and quadratic forms, it transpires that on taking $n > V(30,100,175)$ we get the largest lower bound of $n$. Therefore to get our quintic form in the shape as in Result \ref{magma3} we require $n > V(30,100,175)$. The difference in this case opposed to the cases when $p \ne 5$, is that we look at the form 
\begin{eqnarray*}
g(t_{1},\ldots,t_{6}) &:=& \theta_{2}(p^{-r}F(t_{1}\b{e}_{1}+\ldots+t_{6}\b{e}_{6})) \\
&=& \sum_{i}\bar{a}_{i}t_{i}^{5}+\sum_{i < j}\bar{b}_{ij}t_{i}t_{j}^{4} + \sum_{i<j<k}\bar{c}_{ijk}t_{i}t_{j}t_{k}^{3}
\end{eqnarray*}
Where we have $\bar{a}_{i},\bar{b}_{ij}, \bar{c}_{ijk} \in \Z/p^{2}\Z$ with $5\bar{a}_{i} \ne 0$. So for $p=5$ we deduce that $F$ has a non trivial zero over $\Z_{p}$ (a contradiction) by applying Result \ref{magma3} along with the following Lemma.

\begin{lemma} \label{A3}
Let $F(x) \in \Z_{p}[x]$ be a polynomial of any degree. Suppose there exists some $a \in \Z_{p}$, such that
\begin{equation} \label{henhyp2}
p^{2} \mid F(a), \quad p \parallel F'(a) \quad and \quad p \mid F''(a)
\end{equation}
then there exists an $\alpha \in \Z_{p}$ such that, $F(\alpha)=0$. Moreover, $\alpha \equiv a$ (mod $p$).
\end{lemma}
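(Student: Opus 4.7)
The plan is to reduce to the standard form of Hensel's lemma by substituting $x = a + py$ and renormalising. Write the Taylor expansion
\[ F(a+py) \;=\; F(a) \,+\, F'(a)\,(py) \,+\, \frac{F''(a)}{2}(py)^2 \,+\, \sum_{k \ge 3} \frac{F^{(k)}(a)}{k!}(py)^k, \]
recalling that for any polynomial $F \in \Z_p[x]$ the hyperderivatives $F^{(k)}(a)/k!$ all lie in $\Z_p$. Using the hypotheses I can write $F(a) = p^2 A$, $F'(a) = p B$ with $B \in \Z_p^*$, and (since $p=5$, so $2$ is a unit) $F''(a)/2 = p C$ for some $C \in \Z_p$. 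Substituting gives
\[ F(a+py) \;=\; p^2 A \,+\, p^2 y B \,+\, p^3 y^2 C \,+\, p^3 y^3 \cdot\bigl(\text{element of }\Z_p\bigr) \,+\, \cdots, \]
so that the polynomial
\[ G(y) \;:=\; p^{-2} F(a+py) \;=\; A + yB + p\,H(y), \qquad H(y) \in \Z_p[y], \]
lies in $\Z_p[y]$.

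Now I apply the standard Hensel's lemma to $G$. Reducing modulo $p$ yields $G(y) \equiv A + yB \pmod{p}$ and $G'(y) \equiv B \pmod{p}$. Since $B$ is a $p$-adic unit, $y_0 := -A B^{-1} \in \Z_p$ satisfies $G(y_0) \equiv 0 \pmod p$ and $G'(y_0) \equiv B \not\equiv 0 \pmod p$. Standard Hensel then produces a unique $y^* \in \Z_p$ with $y^* \equiv y_0 \pmod p$ and $G(y^*) = 0$. Setting $\alpha = a + p y^*$ gives $F(\alpha) = p^2 G(y^*) = 0$ and $\alpha \equiv a \pmod p$, as required.

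The only delicate point is the presence of the factor $2$ in the denominator of the second Taylor coefficient: the whole argument depends on being able to divide $F''(a)$ by $2$ and retain a $p$-adic integer, so that the hypothesis $p \mid F''(a)$ translates into $F''(a)/2 \equiv 0 \pmod p$. This is automatic for the application at hand, where $p = 5$, and more generally for any odd $p$; I expect this to be the only real subtlety, since everything else is a direct renormalisation of the usual Hensel argument.
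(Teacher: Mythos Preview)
Your argument is correct and rests on the same Taylor-expansion computation as the paper's. The only difference is packaging: the paper iterates by hand, showing at each step that a root modulo $p^{r+2}$ lifts to one modulo $p^{r+3}$ (essentially reproving the Newton--Hensel iteration), whereas you make the single substitution $x=a+py$, observe that $G(y):=p^{-2}F(a+py)\in\Z_p[y]$ with $G(y)\equiv A+By\pmod p$ and $B$ a unit, and then invoke the standard Hensel lemma as a black box. Your route is slightly cleaner, and you are right to flag the issue with the factor~$2$: for $p=2$ the hypothesis $p\mid F''(a)$ is vacuous (since $F''$ always takes even values on $\Z_2$), and in fact the lemma fails there --- e.g.\ $F(x)=x^2+2x+4$ with $a=0$ satisfies all three hypotheses but has no $2$-adic root. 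So the statement should really be read for odd $p$, which is all that is needed for the application at $p=5$.
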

In order not to interrupt our discussion we shall postpone the proof of this Lemma until the end of the present section. We have just proved that 

\begin{equation} \label{res3}
v_{5}(5) \le V(30,100,175;5).
\end{equation}

We now have all the information we need to go ahead and prove Theorems \ref{main1} and \ref{main2}. We shall use (\ref{cubicbd}) to deal with the primes $p \in \{2,3,5,11\}$ and for $p \in \{7,13\}$ we shall use Theorem \ref{newresult}. By Theorem \ref{newresult} we can prove that for $p \ne 3$ we have
\begin{equation} \label{newcubicbd}
V(a,b,c;p) \le V(\alpha,0;p) + \beta
\end{equation}
where
\[ \alpha = b + 3a(a-1), \]
\[ \beta = c + 3a(2b+3a) + \frac{1}{2}a(a-1)(24a-21). \]
\underline{$p=2$}: By (\ref{impcubicbd}) we have
\[ V(15,20,25;p) \le V(380,0;p)+11725. \]
By using Lemma \ref{sysquad} we get
\[ V(380,0;p) \le 288784. \]
Hence we obtain
\[ v_{5}(2) \le 300509 \]
as required. \\ \\
\underline{$p=3$}: By (\ref{impcubicbd}) we have
\[ V(15,20,25;p) \le V(500,0;p) +20345. \]
By using Lemma \ref{sysquad} we get
\[ V(500,0;p) \le 499984. \] 
Hence we obtain 
\[ v_{5}(3) \le 520329 \] 
as required. \\ \\
\underline{$p=5$}: By (\ref{impcubicbd}) we have
\[ V(30,100,175;p) \le V(1495,0;p) + 92875. \]
By using Lemma \ref{sysquad} we get
\[ V(1570,0;p) \le 4470036. \]
Hence we obtain
\[ v_{5}(5) \le 4562911 \]
as required. \\ \\
\underline{$p=7$}: By (\ref{newcubicbd}) we have
\[ V(10,20,25;p) \le V(290,0;p) + 11980. \]
By using Lemma \ref{sysquad} we get
\[ V(290,0;p) \le 168184. \]
Hence we obtain
\[ v_{5}(7) \le 180164\]
as required. \\ \\
\underline{$p=11$}: By (\ref{impcubicbd}) we have
\[ V(15,50,75;p) \le V(410,0;p) + 13125. \] 
By using Lemma \ref{L5}
\[ V(410,0;p) \le 335372. \]
Hence we obtain
\[ v_{5}(11) \le 348497 \]
as required. \\ \\
\underline{$p=13$}: By (\ref{newcubicbd}) we have 
\[ V(10,20,25;p) \le V(290,0;p) + 11980. \]
By using Lemma \ref{L5}
\[ V(290,0;p) \le 167612. \]  
Hence we obtain
\[ v_{5}(13) \le 179592 \]
as required.

It remains to prove Lemma \ref{A3}. By (\ref{henhyp2}) we have, $F(a) = p^{2} \bar{F}(a)$, $F'(a) = p \bar{F'}(a)$  where $p \nmid \bar{F'}(a)$ and $F''(a) = p \bar{F''}(a)$, for some polynomial $\bar{F}(x)$. Now consider
\begin{eqnarray*}
F(a+pa') &\equiv& F(a) + pa'F'(a) + p^{2}a'^{2}\frac{F''(a)}{2} \qquad \mbox{(mod $p^{3}$)} \\
&\equiv& p^{2}(\bar{F}(a) + a' \bar{F'}(a)) \qquad \qquad \qquad \; \; \; \mbox{(mod $p^{3}$)}
\end{eqnarray*}
Hence to find an $a'$ such that $F(a+pa') \equiv 0$ (mod $p^{3}$), we require
\[ \bar{F}(a) + a'\bar{F'}(a) \equiv 0 \; \; \mbox{(mod $p$)} \]
However since $p \nmid \bar{F'}(a)$, this is always possible. 
It follows by repeating this process with $\alpha_{r}=a+\ldots+p^{r}a^{(r)}$, we can always find some $a^{(r)}$ such that $F(\alpha_{r}) \equiv 0$ (mod $p^{r+2}$). We complete the proof by taking $\alpha = \lim_{r \to \infty}\alpha_{r}$.

\section{Proof of Theorem \ref{newresult}}

We can develop Heath-Brown's \cite{Hea08} hybrid approach for a single cubic form to deal with a system of cubic and quadratic forms. Many of the key ideas required to prove Theorem \ref{newresult} have already been introduced in the previous section. Suppose we have a system $\b{F} = (f^{(i,j)})$ where $f^{(i,j)}(x_{1},\ldots,x_{n}) \in \Q_{p}[x_{1},\ldots,x_{n}]$ is a form of degree $i$ for $0 \le j \le r_{i}$ and $1 \le i \le 3$. We will of course assume throughout that $r_{3}>0$, since otherwise we are reduced to a question concerning a system of quadratic forms. Our approach will be to apply the hybrid reduction to one of the cubic forms in the system, whilst simultaneously requiring that the vectors we obtain in our ``admissible'' set are such that the rest of the system vanishes in the span of this set. Let $\b{F}=\{F\} \cup \b{G}$ where $F = f^{(3,1)}$. Suppose that for each $\x \ne \b{0}$ such that $\b{G}(\x)=0$ we have $F(\x) \ne 0$. From now on let 
\[\tilde{r}_i=\left\{\begin{array}{cc} r_d-1,& \;\; i=3\\ r_i,&\;\;
    i<d \end{array}\right.\]
For $\x \ne \b{0}$ such that $\b{G}(\x)=0$ we shall say that $\x$ has ``level $r$'' if $0 \le r \le 2$ and $v(F(\x)) \equiv r$ (mod $3$). Clearly the level of $\x$ is a well defined property since for any such $\x$ we have assumed that $F(\x) \ne 0$. For any set 
\[ S = \{ \b{e}_{1}, \ldots, \b{e}_{m} \} \subset \Q_{p}^{n}-\{ \b{0} \}  \]
We shall say that $S$ is ``$\b{F}$-admissible'' provided that the following conditions hold

\begin{enumerate}
\item[(i)]
For each vector $\b{e}_{i} \in S$, we have $0 \le v(F(\b{e}_{i})) \le 2$
\item[(ii)]
There is some subset $S_{r} \subseteq S$ of cardinality 3, whose vectors all have level $r$
\item[(iii)]
If $\b{e}_{i_{1}},\ldots,\b{e}_{i_{l}} \in S$ all have the same level, with $i_{1} < i_{2} < \ldots < i_{l}$ for $l \le 3$ then, by writing $\b{e}_{i_{r}} = \b{e}_{r}'$ we have
\[ F(t_{1}\b{e}_{1}' + \ldots + t_{l}\b{e}_{l}') = \sum_{1 \le i \le l} a_{i}t_{i}^{3} + \sum_{1 \le i < j \le l} b_{ij}t_{i}t_{j}^{2} \]
for some $a_{i}, b_{ij} \in \Q_{p}$ and
\[ f^{(i,j)}(t_{1}\b{e}_{1}' + \ldots + t_{l}\b{e}_{l}') = 0 \]
for any $t_{1},\ldots,t_{l} \in \Q_{p}$, where $0 \le j \le \tilde{r}_{i}$ and $1 \le i \le 3$
\end{enumerate}
We shall now show that we can find an $\b{F}$-admissible set provided $n$ is sufficiently large

\begin{lemma} \label{F-ad}
With the same hypothesis as above, we can always find an $\b{F}$-admissible set provided 
\begin{equation} \label{cond1}
n > V(r_{3}-1,r_{2}+6(r_{3}-1),r_{1}+6r_{2}+9r_{3}). 
\end{equation}
\end{lemma}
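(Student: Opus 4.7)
The plan is an inductive construction along the lines of Lemma \ref{kad}, suitably adapted to a system of mixed-degree forms. Start by choosing any $\b{e}_{1} \ne \b{0}$ with $\b{G}(\b{e}_{1}) = 0$, which exists because, by monotonicity of $V$, the hypothesis (\ref{cond1}) gives $n > V(r_{3}-1, r_{2}, r_{1}; p)$. The standing assumption yields $F(\b{e}_{1}) \ne 0$, so we scale $\b{e}_{1}$ by a power of $p$ to achieve (i). Suppose inductively that $S = \{\b{e}_{1}, \ldots, \b{e}_{m}\}$ satisfies (i) and (iii), with $l_{i} < 3$ vectors at level $i$ for each $i \in \{0,1,2\}$ (otherwise (ii) already holds).

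To extend $S$ by $\b{e}_{m+1}$, we impose the conditions needed for (iii) to remain valid under \emph{each} of the three possible level assignments for $\b{e}_{m+1}$. Fix a level $i$ and consider the expansion
\[ f\Bigl( t \b{e}_{m+1} + \sum_{k=1}^{l_{i}} t_{k} \b{e}_{k}^{(i)} \Bigr) \]
for each form $f$ in $\b{F}$, viewing each coefficient as a form in $\b{e}_{m+1}$. The level-independent conditions, coming from the pure-$t$ monomials, are $f^{(3,j)}(\b{e}_{m+1}) = 0$ for $2 \le j \le r_{3}$, $f^{(2,j)}(\b{e}_{m+1}) = 0$ for $1 \le j \le r_{2}$, and $f^{(1,j)}(\b{e}_{m+1}) = 0$ for $1 \le j \le r_{1}$, contributing $r_{3}-1$ cubics, $r_{2}$ quadratics, and $r_{1}$ linears. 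The level-dependent conditions, coming from the mixed monomials, arise as follows: for $f^{(3,j)}$ with $j \ge 2$ the coefficients of $t^{2} t_{k}$, $t t_{k}^{2}$, and $t t_{k_{1}} t_{k_{2}}$ must vanish, giving $l_{i}$ quadratics and $l_{i} + \binom{l_{i}}{2} = \binom{l_{i}+1}{2}$ linears per level per form; for $f^{(2,j)}$ the coefficients of $t t_{k}$ yield $l_{i}$ linears per level per form. The shape constraint on $F = f^{(3,1)}$ is weaker, since the monomials $t^{3}$ and $t^{2} t_{k}$ are \emph{allowed} as the shape terms $a_{l_{i}+1} t^{3}$ and $b_{k, l_{i}+1} t_{k} t^{2}$, so only the $\binom{l_{i}+1}{2}$ linear conditions from $t t_{k}^{2}$ and $t t_{k_{1}} t_{k_{2}}$ survive per level. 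Summing over the three levels in the worst case $l_{i} = 2$ produces exactly $r_{3}-1$ cubic, $r_{2}+6(r_{3}-1)$ quadratic, and $r_{1}+6r_{2}+9r_{3}$ linear conditions on $\b{e}_{m+1}$, matching (\ref{cond1}).

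By the definition of $V$, the hypothesis (\ref{cond1}) then yields a nontrivial simultaneous zero $\b{e}_{m+1} \in \Q_{p}^{n}$. Since $\b{G}(\b{e}_{m+1}) = 0$, the standing hypothesis gives $F(\b{e}_{m+1}) \ne 0$, so the level of $\b{e}_{m+1}$ is well defined; scaling by an appropriate power of $p$ secures (i). Linear independence of the resulting same-level subset follows from Lemma \ref{linind} applied inductively, using the fact that the shape of (iii) contains no monomial of degree $1$ in the last variable. Seven iterations suffice for the pigeonhole principle to force $l_{i} \ge 3$ for some $i$, at which point (ii) holds. The main bookkeeping subtlety is the asymmetry between the shape constraint on $F$ (which produces \emph{no} cubic or quadratic conditions) and the full vanishing of the other $f^{(3,j)}$ (which produces both), and it is precisely this asymmetry that yields the coefficients $6(r_{3}-1)$ and $9r_{3}$ in (\ref{cond1}), rather than larger values.
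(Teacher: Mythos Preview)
Your proposal is correct and follows essentially the same inductive construction as the paper: build $S$ one vector at a time, at each step imposing the vanishing conditions needed for (iii) under every possible level assignment, count the resulting cubic/quadratic/linear constraints, and invoke pigeonhole once $|S|=7$. Your bookkeeping of the asymmetry between $F$ (only $\binom{l_i+1}{2}$ linear constraints per level) and the remaining cubics $f^{(3,j)}$, $j\ge 2$ (an additional $l_i$ quadratic constraints per level) is exactly what produces the ``$+9$'' in the paper's formula for $s_1$ and hence the coefficients $6(r_3-1)$ and $9r_3$ in (\ref{cond1}); the paper's displayed requirement ``$F_r(\b{e}_{m+1};\b{u})=0$ for all $|\b{u}|=1$'' should in fact read $|\b{u}|=2$, as your analysis makes clear.
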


\begin{proof}
As in the proof of Lemma \ref{kad}, we shall inductively construct an $\b{F}$-admissible set. We can choose $\b{e}_{1} \in \Q_{p}^{n} - \{\b{0}\}$, so that (iii) holds, since
\[ n > V(r_{3}-1,r_{2}+6(r_{3}-1),r_{1}+6r_{2}+9r_{3}) > V(r_{3}-1,r_{2},r_{1}). \]
Moreover we can ensure that (i) holds by multiplying $\b{e}_{1}$ by an appropriate power of $p$. Next suppose that (i) and (iii) hold for the set $S= \{ \b{e}_{1},\ldots,\b{e}_{m} \}$, where $m \ge 1$. We need to find a vector $\b{e}_{m+1}$ such that (i) and (iii) hold for $S \cup \{\b{e}_{m+1}\}$. Suppose that in $S$ there are $l_{r}$ vectors of level $r$, where $0 \le l_{r} < 3$ and $0 \le r \le 2$ (note that if $l_{r} \ge 3$ then $S$ is already $\b{F}$-admissible since (ii) holds). Moreover let 
\[ L_{r} = \{ \b{e}_{1}^{(r)}, \ldots, \b{e}_{l_{r}}^{(r)} \} \subseteq S \]
be the subset of $S$ which contains all the vectors of level $r$. Clearly $S = L_{0} \cup L_{1} \cup L_{2}$. Now consider

\[ F(t_{1}\b{e}_{1}^{(r)}+\ldots+t_{l_{r}}\b{e}_{l_{r}}^{(r)}+t\b{e}_{m+1}) =  \sum_{|\b{u}| \le 3} \b{t}^{\b{u}}t^{3-|\b{u}|}F_{r}(\b{e}_{m+1};\b{u}) \]
and

\[ f^{(i,j)}(t_{1}\b{e}_{1}^{(r)}+\ldots+t_{l_{r}}\b{e}_{l_{r}}^{(r)}+t\b{e}_{m+1}) = \sum_{|\b{u}| \le i} \b{t}^{\b{u}}t^{i-|\b{u}|}f_{r}^{(i,j)}(\b{e}_{m+1};\b{u})  \]
where 

\[ \b{t}^{\b{u}} = \prod_{j=1}^{l_{i}} t_{j}^{u_{j}}. \]
Then
 \[F_{r}(\b{e}_{m+1};\b{u}) \; \mbox{is a form of degree $3- |\b{u}|$ in $\b{e}_{m+1}$}, \]
and
 \[f^{(i,j)}_{r}(\b{e}_{m+1};\b{u}) \; \mbox{is a form of degree $i- |\b{u}|$ in $\b{e}_{m+1}$}. \]
\\ 
Since \emph{a priori} we don't know what the level of $\b{e}_{m+1}$ will be, it follows that for $S \cup \{\b{e}_{m+1}\}$ to satisfy (iii), we require 

\[ F_{r}(\b{e}_{m+1};\b{u})=0, \qquad \mbox{for all} \; \;  |\b{u}| = 1 \quad \mbox{and} \quad 0 \le r \le 2 \]
and
\[ f^{(i,j)}_{r}(\b{e}_{m+1};\b{u})=0, \qquad \mbox{for all $\b{u}$,} \qquad 1 \le j \le \tilde{r}_{i}, \; \; 1 \le i \le 3 \; \; \mbox{and} \; \; 0 \le r \le 2. \]
Hence it follows as before, that we require

\[ n > V(s_{3},s_{2},s_{1};p) \]
where
\[ s_{3} = \tilde{r}_{3}, \qquad s_{2} = \tilde{r}_{2} + \tilde{r}_{3}\sum_{i=0}^{2}l_{i}, \qquad s_{1} = \tilde{r}_{1} + \tilde{r}_{2}\sum_{i=0}^{2}l_{i} + \tilde{r}_{3}\sum_{i=0}^{2}\binom{l_{i}+1}{2} +9. \]
with the understanding that $\binom{a}{b}=0$ if $a<b$.
Moreover we can easily ensure that $\b{e}_{m+1}$ satisfies (i), by multiplying it by an appropriate power of $p$. So we have just shown that we can construct a set $S$ of arbitrary size $m$, in which the conditions (i) and (iii) hold. We should now remark that if $m > 6$ then (ii) automatically holds. It is clear that the scenario in which $V(s_{1},s_{2},s_{3};p)$ gives the largest bound is whenever, $l_{i}=2$ for each $0 \le i \le 4$. Hence we are guaranteed to have a $\b{F}$-admissible set $S$ provided
\[ n > V(r_{3}-1,r_{2}+6(r_{3}-1),r_{1}+6r_{2}+9r_{3};p) \]
as required.
\end{proof}

We proceed by noting a key result as found in Heath-Brown's paper \cite[Lemma 6]{Hea08}.

\begin{lemma} \label{cubic-ad}
Suppose $p \ne 3$ and consider
\[ f(x,y,z) = ax^{3} + bxy^{2} + cy^{3} + (dx+ey)z^{2} + fz^{3} \in \F_{p}[x,y,z] \]
where $acf \ne 0$. Then $f$ has at least one non-singular zero over $\F_{p}$.
\end{lemma}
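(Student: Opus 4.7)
My approach is to split based on the behavior of the binary cubic obtained by setting $z=0$. Define $\tilde f(x,y) := f(x,y,0) = ax^3 + bxy^2 + cy^3$. The critical structural observation is that, since $p \ne 3$, and since $a, c \ne 0$ while the coefficient of $x^2 y$ vanishes, $\tilde f$ cannot be the cube of any linear form: expanding $(\alpha x + \beta y)^3$, the coefficient $3\alpha^2\beta$ of $x^2y$ would force $\alpha = 0$ (contradicting $a \ne 0$) or $\beta = 0$ (contradicting $c \ne 0$). Consequently every projective root of $\tilde f$ over $\bar{\F}_p$ has multiplicity at most two.

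In Case A, where $\tilde f$ has a nontrivial $\F_p$-zero, I would argue that some such zero must be a simple root. If $\mathrm{disc}(\tilde f) \ne 0$ this is immediate. If $\mathrm{disc}(\tilde f) = 0$, then $\tilde f = L^2 M$ over $\bar{\F}_p$ with $L \ne M$; a Galois argument on the root multiset $\{\alpha, \alpha, \beta\}$ (multiplicity is preserved under Frobenius, and sending $\alpha \mapsto \beta$ would swap the multiplicities) shows both $L$ and $M$ must be defined over $\F_p$, giving a simple rational root from $M$. At any simple zero $(x_0, y_0)$ of $\tilde f$ one has $\nabla \tilde f(x_0, y_0) \ne 0$; since $f_z(x_0, y_0, 0) = 0$ identically, the full gradient $\nabla f(x_0, y_0, 0) = (\tilde f_x, \tilde f_y, 0)(x_0, y_0)$ is nonzero as well, producing the desired non-singular zero of $f$.

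In Case B, where $\tilde f$ is irreducible over $\F_p$, I would first show that $f$ itself is irreducible: any nontrivial factorization of a cubic contains a linear factor, whose restriction to $z = 0$ is either a linear factor of $\tilde f$ (impossible by irreducibility) or $z$ itself (impossible since $a \ne 0$ implies $z \nmid f$). Hence $C := \{f = 0\} \subset \Proj^2$ is an irreducible plane cubic, so it has at most one singular point. For $p \ge 5$ the Hasse-Weil bound gives $\#C(\F_p) \ge p + 1 - 2\sqrt p \ge 2$ when $C$ is smooth, while the classical point counts for nodal and cuspidal irreducible plane cubics yield $\#C(\F_p) \in \{p, p+1, p+2\}$ in the singular case; subtracting the at-most-one singular point yields a non-singular $\F_p$-point. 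For $p = 2$ the Weil bound is too weak, but irreducibility of $\tilde f$ over $\F_2$ together with $acf \ne 0$ forces $a = b = c = f = 1$ and leaves only $d, e \in \{0,1\}$, so only four configurations remain, each verified by direct enumeration (e.g.\ $(1,0,1)$, $(0,1,1)$, $(1,1,1)$ are explicit smooth zeros). The main obstacle is precisely this small-prime tail, since Hasse-Weil alone does not suffice at $p = 2$; however the combinatorial restrictions from irreducibility of $\tilde f$ reduce the verification to a short finite check.
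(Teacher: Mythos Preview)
The paper does not actually prove this lemma; it merely quotes it as \cite[Lemma~6]{Hea08} and proceeds. So there is no in-paper argument to compare against, and your case split on the binary form $\tilde f(x,y)=f(x,y,0)$, followed by Hasse--Weil/point-counting for the curve $C=\{f=0\}$, is an independent route rather than a reconstruction of anything here.

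Your Case~A is fine, and the $p=2$ verification in Case~B checks out. The real issue is a logical gap in Case~B for $p\ge 5$: you prove that $f$ is irreducible over $\F_p$ (no $\F_p$-linear factor), but then invoke ``at most one singular point'', the Hasse--Weil bound, and the nodal/cuspidal point counts, all of which require $C$ to be \emph{geometrically} irreducible. The missing scenario is that $f$ splits over $\bar\F_p$ as a product of three Galois-conjugate lines $L_1L_2L_3$ defined over $\F_{p^3}$. In that situation every $\F_p$-point of $C$ lies on all three lines, so $C^{\mathrm{sm}}(\F_p)=\varnothing$ (either $C(\F_p)$ is the single concurrent point, which is singular, or the lines form a triangle and $C(\F_p)=\varnothing$). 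Your argument, as written, does not exclude this and therefore does not conclude.

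The gap is genuinely fillable, but it uses the specific monomial shape of $f$ and is not automatic. Writing $L_1=\alpha x+\beta y+\gamma z$ with $\alpha\beta\gamma\neq0$ (forced by $acf\neq0$) and $u=\beta/\alpha$, $v=\gamma/\alpha\in\F_{p^3}$, the vanishing of the $x^2y,\ x^2z,\ y^2z,\ xyz$ coefficients translates into
\[
\mathrm{Tr}(u)=\mathrm{Tr}(v)=\mathrm{Tr}(uv)=\mathrm{Tr}(u^{-1}v)=0,
\]
where $\mathrm{Tr}=\mathrm{Tr}_{\F_{p^3}/\F_p}$. Irreducibility of $\tilde f$ forces $u\notin\F_p$, so $1,u,u^{-1}$ are $\F_p$-linearly independent in $\F_{p^3}$; by nondegeneracy of the trace pairing the three functionals $\mathrm{Tr}(\cdot),\mathrm{Tr}(u\,\cdot),\mathrm{Tr}(u^{-1}\cdot)$ are then independent, whence $v=0$, contradicting $f\neq0$. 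Inserting this paragraph before you appeal to Hasse--Weil would close the argument.
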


We are now in a position to complete the proof of Theorem \ref{newresult}. By Lemma \ref{F-ad}, we can find an $\b{F}$-admissible set provided that (\ref{cond1}) holds. So without loss generality there exists some set
\[ S_{r} = \{ \b{e}_{1},\b{e}_{2},\b{e}_{3} \} \]
which satisfies (ii) and (iii). By Lemma \ref{linind} $S$ is a linearly independent set, since the form
\[ F( t_{1}\b{e}_{1}+\ldots+t_{j}\b{e}_{j}) \]
contains no terms of degree one in $t_{j}$ for $j \le 3$. Moreover by Lemma \ref{coeff}, we have that the form
\begin{equation} \label{F}
p^{-r}F( t_{1}\b{e}_{1}+t_{2}\b{e}_{2}+t_{3}\b{e}_{3}) = \sum_{1 \le i \le 3}a_{i}t_{i}^{3} + \sum_{1\le i < j \le 3}b_{ij}t_{i}t_{j}^{2}
\end{equation}
is such that $a_{i}, b_{ij} \in \Z_{p}$ and $v(a_{i})=0$. So by looking at the form (\ref{F}) modulo $p$, we can apply Lemma \ref{cubic-ad} to deduce that $F$ has a zero $\x \ne 0$ such that $\b{G}(\x)=0$. This is a contradiction, hence we complete the proof of Theorem \ref{newresult}.

\newpage

\bibliography{bibo}
\bibliographystyle{plain}

\end{document}